\definecolor{codegray}{rgb}{0.95,0.95,0.95}
\definecolor{pykeyword}{rgb}{0.13,0.13,1}
\definecolor{pystring}{rgb}{0.58,0,0.82}
\lstdefinestyle{pythonstyle}{
    backgroundcolor=\color{codegray},
    language=Python,
    basicstyle=\ttfamily\small,
    keywordstyle=\color{pykeyword}\bfseries,
    stringstyle=\color{pystring},
    commentstyle=\color{gray},
    showstringspaces=false,
    numbers=left,
    numberstyle=\tiny,
    frame=single,
    breaklines=true,
    tabsize=4,
}
\numberwithin{equation}{section}
\theoremstyle{plain}
\newtheorem{theorem}{Theorem}
\newtheorem{defn}[theorem]{Definition}
\newtheorem{lemma}[theorem]{Lemma}
\newtheorem{prop}[theorem]{Proposition}
\begin{document}
\title{Visibility polynomial of corona of two graphs} 
\author{Tonny K B}
\address{Tonny K B, Department of Mathematics, College of Engineering Trivandrum, Thiruvananthapuram, Kerala, India, 695016.}
\email{tonnykbd@cet.ac.in}
\author{Shikhi M}
\address{Shikhi M, Department of Mathematics, College of Engineering Trivandrum, Thiruvananthapuram, Kerala, India, 695016.}
\email{shikhim@cet.ac.in}
\begin{abstract}
In multiagent systems, effective coordination, coverage, and communication often rely on the concept of visibility between agents or nodes within the system. Graph-theoretically, for any subset $X$ of vertices of a graph $G$, two vertices are said to be $X$-visible if there exists a shortest path between them that contains no vertex of $X$ as an internal vertex. In this paper, we investigate the visibility polynomial associated with the corona product of two graphs. The visibility polynomial encodes the number of mutual-visibility sets of all orders within a graph, and the process of enumerating these sets provides a deeper understanding of their structural properties. We characterize the structure of mutual-visibility sets arising specifically within the corona product. As part of this study, we introduce the notion of $C_Q$-visible sets, defined with respect to a selected subset $Q$ of vertices in a graph $G$. A $C_Q$-visible set is a collection of vertices in $\overline{Q}$ that is not only $Q$-visible, but also individually visible from each vertex in $Q$. Using this concept, we establish several characterizations and properties of mutual-visibility sets within the corona product, thereby providing deeper insights into their structure and behavior.
\end{abstract}

\subjclass[2010]{05C30, 05C31, 05C76}
\keywords{mutual-visibility set, visibility polynomial, set-seperator, $c_Q$-visible set, absolute-clear graph}
\maketitle
\section{Introduction}
 Let $G(V,E)$ be a simple graph and let $X\subseteq V$.  Two vertices $u,v\in V$ are said to be $X$-visible \cite{Stefano} if there exists a shortest path $P$ from $u$ to $v$ such that the internal vertices of $P$ do not belong to $X$; that is, $V(P)\cap X \subseteq \{u, v\}$. A set $X$ is called a mutual-visibility set of $G$ if every pair of vertices in $X$ is $X$-visible. The maximum size of such a set in $G$ is referred to as the mutual-visibility number, denoted by $\mu(G)$. A vertex $u\in V(G)\setminus X$ is said to be $X$-visibile if $u,v$ are $X$-visible for every $v\in X$. 

The notion of mutual visibility in graphs has received increasing attention because of its significance across both theoretical and practical domains. Wu and Rosenfeld first explored the foundational visibility problems in the context of pebble graphs~\cite{Geo_convex_1, Geo_convex_2}. Later, Di Stefano introduced the formal definition of mutual-visibility sets in graph-theoretic terms~\cite{Stefano}. The concept of mutual-visibility serves as a powerful tool for analyzing the transmission of information, influence, or coordination under topological constraints. This graph-theoretic framework has been the subject of numerous investigations~\cite{MV_1, MV_2, MV_3, MV_4, MV_5, MV_6, MV_7, MV_8, MV_9, TMV_1, TMV_2}, and several variants of mutual visibility have also been proposed in ~\cite{MV_10}. 

An associated notion is that of a general position set, which was independently proposed in \cite{GP_1, GP_3}. This concept describes a subset of vertices in a graph where no three distinct vertices lie along the same shortest path. Formally, a set 
$S\subseteq V(G)$ in a connected graph 
$G$ becomes a general position set if none of its vertices appear on a geodesic between any two others in the set. The idea has been extended by defining the general position polynomial of a graph, as introduced and examined in \cite{GP_2}.

In practical applications, mutual visibility is significant in robotics. In multi-agent systems, robots must often reposition themselves to ensure unobstructed visibility between all agents, facilitating decentralized control algorithms for formation, navigation, and surveillance in unknown or dynamic environments. For detailed studies, see~\cite{robotics1,robotics2,robotics3,robotics4,robotics5,robotics6,robotics7}. In robotic systems and sensor networks, effective coordination, coverage, and communication often rely on the concept of visibility between agents or nodes in the system.

Studying mutual visibility sets of all orders enables a deeper understanding of how groups of agents can observe each other simultaneously, which is critical for tasks such as surveillance, target tracking, and distributed decision-making. In environments with varying visibility constraints due to obstacles, analyzing mutual-visibility sets of all sizes helps to determine how many agents can simultaneously maintain line-of-sight and adjust their positions accordingly. In an important development in this direction, B. Csilla et.al. introduced the visibility polynomial in~\cite{sandi}, which serves as a polynomial invariant capturing the distribution of mutual-visibility sets of all orders within a graph. Let $G$ be a graph of order $n$. The visibility polynomial of $G$, denoted by $\mathcal{V}(G)$, is defined in \cite{sandi} as
\[
   \mathcal{V}(G) = \sum_{i \geq 0} r_i x^{i},
\]
where $r_i$ denotes the number of mutual-visibility sets of $G$ having cardinality $i$.
 The visibility polynomial encodes the number of mutual-visibility sets of all orders within a graph, and the process of enumerating these sets provides a deeper understanding of their structural properties. In \cite{VP_1}, the present authors investigated the visibility polynomial associated with the join of two graphs.

This paper investigates the visibility polynomial associated with the corona product of two graphs. As part of this investigation, we analyze the structure of mutual-visibility sets arising within the corona product. To facilitate this analysis, we introduce the concept of $C_Q$-visible sets in Section~\ref{P2.sec3}. A $C_Q$-visible set is a subset $W$ of $\overline{Q}$, where $Q \subseteq V(G)$, that is $Q$-visible and $\{u, w\}$ is $Q$-visible for all $u \in Q$ and $w \in W$. Using this concept, we establish several characterizations and properties of visibility sets within corona products, providing deeper insights into their structure and behavior. In  Section~\ref{sec4}, we introduce absolute-clear graphs, a notion essential for the development of visibility polynomials of corona products.
\section{Notations and preliminaries}
 $G(V,E)$ represents an undirected simple graph with vertex set $V(G)$ and edge set $E(G)$. Unless otherwise stated, all graphs in this paper are assumed to be connected, so that there is at least one path between each pair of vertices. We follow the standard graph-theoretic definitions and notations as presented in \cite{Harary}.

 The complement of a graph $G$ is denoted by $\overline{G}$, which has the same set of vertices as that of $G$ and two vertices in $\overline{G}$ are adjacent if and only if they are not adjacent in $G$. A complete graph $K_n$ on $n$ vertices is a graph in which there is an edge between any pair of distinct vertices. A sequence of vertices $(u_0,u_1,u_2,\ldots,u_{n})$ is referred to as a $(u_0,u_n)$-path in a graph $G$ if $u_iu_{i+1}\in E(G)$, $\forall i\in\{0,1,\ldots,(n-1)\}$. A cycle (or circuit) in a graph $G$ is a path $(u_0,u_1,u_2,\ldots, u_n)$ together with an edge $u_0u_n$. If a graph $G$ on $n$ vertices itself is a path, it is denoted by $P_n$, and if the graph $G$ itself is a cycle, it is denoted by $C_n$.

 The shortest path between two vertices of a graph is called a geodesic. The distance $ d_G(u,v)$ between two vertices $ u$  and $ v$  in $G$ is defined as the length of a geodesic from $u$ to $v$ in $ G$. The maximum distance between any pair of vertices of $G$  is called the diameter of  $G$, denoted by $diam(G)$. Let $X$ be any set. The cardinality of $X$ is denoted as $|X|$. A proper subset of $X$ is a set containing some but not all of the elements of $X$. Let $X\subseteq V(G)$. Then the induced subgraph of $G$ by $X$ is the graph $G[X]$ with vertex set $X$ and with the edges of $G$ having both endpoints in $X$. The diameter of $G[X]$ in the graph $G$ is defined as $diam_G(X) = \displaystyle\max_{u, v \in X} d_G(u, v)$  and is denoted by  $diam_G(X)$.  The number of mutual-visibility sets of order $k$ of $G$  having diameter $d$ in $G$ is denoted by $\Theta_{k,d}(G)$ \cite{VP_1}.
 
  The graph $G\backslash v$ denotes the graph derived from $G$ by removing a single vertex $v$ from $G$ together with all the edges incident on $v$. If the removal of a vertex $v$ disconnects the graph into two or more components, then $v$ is referred to as a cut vertex. 
 
Let $G$ and $H$ be two graphs. The corona of $G$ and $H$, denoted by $G \odot H$, is the graph obtained by taking one copy of $G$ and $|V(G)|$ copies of $H$, and for each vertex $v$ in $G$, joining $v$ to each vertex in the corresponding copy of $H$ associated with  $v$, denoted by $H_v$. 
\section{Mutual-visibility sets of the corona of two graphs }\label{P2.sec3}
In this section, we aim to characterize mutual visibility-sets within the corona product of two graphs.
\begin{lemma}\label{P2.lem1}
    Let $A \subseteq V(G)$. Then $A$ is a mutual-visibility set of $G \odot H$ if and only if it is a mutual-visibility set of $G$.
\end{lemma}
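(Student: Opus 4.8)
The plan is to reduce the whole statement to one structural fact about distances in the corona: for any two vertices $u,v\in V(G)$, every $(u,v)$-geodesic of $G\odot H$ is contained in the copy of $G$, and $d_{G\odot H}(u,v)=d_G(u,v)$. Granting this, the lemma is essentially immediate, because when $A\subseteq V(G)$ the internal vertices of any geodesic that matters already lie in $V(G)$, so ``avoiding $A$ internally'' means exactly the same thing whether we work in $G$ or in $G\odot H$.

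First I would prove the distance claim. Let $P$ be a shortest $(u,v)$-path in $G\odot H$ with $u,v\in V(G)$. By the definition of the corona, every vertex of a copy $H_w$ has all of its neighbours inside $V(H_w)\cup\{w\}$. Hence, if $P$ ever visits a vertex of some $H_w$, then the vertex of $P$ immediately preceding the first such visit and the vertex immediately following the last such visit are both forced to be $w$ (neither $u$ nor $v$ lies in $V(H_w)$, since $u,v\in V(G)$); these are two distinct positions on $P$ carrying the same vertex $w$, so $P$ repeats a vertex and could be shortened, contradicting minimality. Therefore every $(u,v)$-geodesic of $G\odot H$ uses only vertices of $V(G)$ and is thus a $(u,v)$-path of $G$, giving $d_{G\odot H}(u,v)\ge d_G(u,v)$; the reverse inequality is clear since $G$ is a subgraph of $G\odot H$. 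Consequently $d_{G\odot H}(u,v)=d_G(u,v)$, and the set of $(u,v)$-geodesics in $G\odot H$ coincides with the set of $(u,v)$-geodesics in $G$.

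Next I would conclude. Fix a pair $u,v\in A\subseteq V(G)$. By the previous paragraph, any geodesic $P$ between $u$ and $v$ in $G\odot H$ has all of its internal vertices in $V(G)$, and the family of such geodesics is exactly the family of $(u,v)$-geodesics of $G$. Since $A\subseteq V(G)$, there is a geodesic $P$ in $G\odot H$ with $V(P)\cap A\subseteq\{u,v\}$ if and only if there is a geodesic in $G$ with the same property; that is, $u$ and $v$ are $A$-visible in $G\odot H$ if and only if they are $A$-visible in $G$. Quantifying over all pairs in $A$ gives: $A$ is a mutual-visibility set of $G\odot H$ if and only if $A$ is a mutual-visibility set of $G$.

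The only real content is the distance claim, and within it the one point needing care is the case analysis on how a path can enter and exit a copy $H_w$ — namely that such a path is forced to pass through $w$ twice. Everything after that is bookkeeping, and notably no connectivity or other hypothesis on $H$ is used.
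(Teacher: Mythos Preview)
Your argument is correct and is exactly the approach the paper takes: the paper's one-line proof asserts that every geodesic of $G\odot H$ between two vertices of $G$ avoids $\bigcup_{w\in V(G)}V(H_w)$, and you have simply supplied the detailed justification of that fact before drawing the identical conclusion.
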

\begin{proof}
  The result follows from the fact that the shortest path between any two vertices of $G$ in $G\odot H$ contains no vertex of $\cup_{w\in V(G)} H_w$.
\end{proof}
\begin{lemma}\label{P2.lem2}
 Let $G$ and $H$ be two graphs. If $S \subseteq \cup_{w \in V(G)} V(H_w) $, then $S$ is a mutual-visibility set of $G \odot H$. 
\end{lemma}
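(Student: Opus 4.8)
The plan is to verify the defining condition directly: I would take an arbitrary pair $x,y\in S$ and exhibit a geodesic of $G\odot H$ joining them whose internal vertices all lie in $V(G)$. Since $S\subseteq\bigcup_{w\in V(G)}V(H_w)$ is disjoint from $V(G)$, such a geodesic certifies that $x$ and $y$ are $S$-visible; as the pair is arbitrary, this establishes that $S$ is a mutual-visibility set of $G\odot H$.

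The first step is to pin down the distance structure of $G\odot H$, which rests on the single observation that the neighbourhood of any vertex of a copy $H_w$ is contained in $V(H_w)\cup\{w\}$, so every path leaving $H_w$ must pass through $w$; in particular $d_{G\odot H}$ agrees with $d_G$ on $V(G)$, and no path between vertices of distinct copies can be shortened by a detour through some $H_w$. This yields two facts. If $x,y\in V(H_w)$, then either $xy\in E(H_w)$, or $d_{G\odot H}(x,y)=2$ and $(x,w,y)$ is a geodesic. If $x\in V(H_u)$ and $y\in V(H_v)$ with $u\neq v$, then $d_{G\odot H}(x,y)=d_G(u,v)+2$, realised by following the edge $xu$, a $G$-geodesic from $u$ to $v$, and the edge $vy$.

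With these in hand the proof is a short case analysis. When $x,y$ lie in a common copy $H_w$: if they are adjacent the edge $xy$ has no internal vertex, and otherwise the geodesic $(x,w,y)$ has $w\in V(G)$ as its only internal vertex; in both cases no internal vertex lies in $S$. When $x$ and $y$ lie in distinct copies $H_u$ and $H_v$: the geodesic described above has internal vertices $u$, $v$, and the internal vertices of a $G$-geodesic from $u$ to $v$, all of which belong to $V(G)$ and hence not to $S$. Thus every pair in $S$ is $S$-visible. The argument is entirely routine; the only point deserving a written line is the claim that detours through the pendant copies cannot shorten distances, which is immediate from the neighbourhood observation.
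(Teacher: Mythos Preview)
Your argument is correct and follows essentially the same route as the paper: a two-case split according to whether $x$ and $y$ lie in the same copy $H_w$ or in distinct copies, exhibiting in each case a geodesic whose internal vertices all lie in $V(G)$. If anything, you are slightly more explicit than the paper in justifying why the constructed paths are genuine geodesics via the neighbourhood observation.
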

\begin{proof}
Let $a, b \in S$. We consider two cases. First, suppose $a, b \in V(H_v)$ for some $v \in V(G)$. If $a$ and $b$ are not adjacent in $H_v$, then there exist a shortest path $P = (a, v, b)$ from $a$ to $b$ in $G \odot H$ such that $V(P) \cap S = \{a, b\}$. Therefore, $a$ and $b$ are $S$-visible.  

Next, suppose $a$ and $b$ belong to different copies of $H$, say $a \in V(H_{v_1})$ and $b \in V(H_{v_2})$ where $v_1,v_2\in V(G)$. Let $Q$ be a shortest path from $v_1$ to $v_2$ in $G$. Then a shortest path from $a$ to $b$ in $G \odot H$ is obtained by concatenating the paths $P_1 = (a, v_1)$, $Q$, and $P_2 = (v_2, b)$. In this case, $V(P) \cap S = \{a, b\}$, and hence $a$ and $b$ are $S$-visible. Therefore, $S$ is a mutual-visibility set of $G \odot H$.
\end{proof}
\begin{lemma}\label{P2.lem3}
   If $v \in V(G)$ and $\emptyset \subsetneq B\subseteq \cup_{w \in V(G) } V(H_w)$, then the set $\{ v\} \cup B$ is a mutual-visibility set of $G \odot H$ if and only if $B$ satisfies one of the following conditions.\\
    1. $B$ is a mutual-visibility set of $H$ with $diam_{H}(B) \leq 2$.\\
    2. $B \subseteq \cup_{w \in V(G)\setminus \{v\} } V(H_w)$ and $\{v\}$-visible subset of $G \odot H$.    
\end{lemma}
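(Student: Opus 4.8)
The plan is to test the mutual-visibility condition one pair of vertices at a time, exploiting the rigidity of $G\odot H$: every vertex of a copy $H_w$ has all of its neighbours inside $V(H_w)\cup\{w\}$, so $w$ is the unique ``gateway'' joining the cluster $V(H_w)\cup\{w\}$ to the rest of the graph; consequently $d_{G\odot H}(w_1,w_2)=d_G(w_1,w_2)$ for $w_1,w_2\in V(G)$, and every geodesic of $G\odot H$ between vertices of two distinct copies $H_{w_1}$, $H_{w_2}$ has the form $a,w_1,\dots,w_2,b$ with all interior vertices in $V(G)$. Before the case split I would record two facts valid under the hypothesis $B\subseteq\bigcup_{w\in V(G)}V(H_w)$: (i) for every $b\in B$ the pair $\{v,b\}$ is $(\{v\}\cup B)$-visible, since if $b\in V(H_v)$ then $vb$ is an edge, while if $b\in V(H_w)$ with $w\neq v$ then a $G$-geodesic from $v$ to $w$ followed by the edge $wb$ is a geodesic whose interior vertices lie in $V(G)$ and hence meet $\{v\}\cup B$ at most in the endpoint $v$; and (ii) any two vertices of a common copy $H_w$ with $w\neq v$ are $(\{v\}\cup B)$-visible, since if they are non-adjacent the length-two geodesic through $w$ has interior vertex $w\notin\{v\}\cup B$.

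For the forward implication, assume $\{v\}\cup B$ is a mutual-visibility set and split on whether $B$ meets $V(H_v)$. If $B\cap V(H_v)\neq\emptyset$, I first claim $B\subseteq V(H_v)$: otherwise there are $a\in B\cap V(H_v)$ and $b\in B\cap V(H_w)$ with $w\neq v$, and since every $a$--$b$ path must use the gateway $v$, the pair $a,b$ is not $(\{v\}\cup B)$-visible, a contradiction. Granting $B\subseteq V(H_v)$, for $a,b\in B$ we have $d_{G\odot H}(a,b)\le 2$; if $a\sim b$ then $d_H(a,b)=1$, and otherwise every geodesic of $G\odot H$ has the form $(a,x,b)$ with $x$ a common neighbour, so visibility (using $v\in\{v\}\cup B$) forces some such $x\in V(H_v)\setminus B$, whence $d_H(a,b)=2$ and $(a,x,b)$ is also a geodesic in $H$ avoiding $B$ internally. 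Thus $B$ is a mutual-visibility set of $H$ with $diam_H(B)\le 2$, i.e.\ condition~1. If instead $B\cap V(H_v)=\emptyset$, the first clause of condition~2 holds, and since $\{v\}\subseteq\{v\}\cup B$ every $(\{v\}\cup B)$-visible pair is in particular $\{v\}$-visible, so $B$ is a $\{v\}$-visible subset of $G\odot H$, i.e.\ condition~2.

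For the reverse implication, assume condition~1 or condition~2; by (i) the pairs $\{v,b\}$ are already $(\{v\}\cup B)$-visible, so it remains to handle pairs inside $B$. Under condition~1 we have $B\subseteq V(H_v)$ and $diam_H(B)\le 2$, so each pair $a,b\in B$ has $d_H(a,b)\in\{1,2\}$: for $d_H(a,b)=1$ the edge $ab$ works, and for $d_H(a,b)=2$ the mutual-visibility hypothesis in $H$ yields an $H$-geodesic $(a,x,b)$ with $x\notin B$, which is also a geodesic of $G\odot H$ with $x\notin\{v\}\cup B$. Under condition~2 we have $B\subseteq\bigcup_{w\neq v}V(H_w)$; (ii) covers pairs within a single copy, and for $a\in V(H_{w_1})$, $b\in V(H_{w_2})$ with $w_1\neq w_2$ (both different from $v$), the $\{v\}$-visibility of $B$ supplies an $a$--$b$ geodesic avoiding $v$ internally, which by the gateway description is $a,w_1,\dots,w_2,b$ with all interior vertices in $V(G)$ and distinct from $v$, hence outside $\{v\}\cup B$. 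The step I expect to demand the most care is the forward case $B\cap V(H_v)\neq\emptyset$: arguing first that it forces $B\subseteq V(H_v)$, and then that a pair $a,b\in B$ with $d_H(a,b)\ge 3$ would leave $v$ as the only candidate interior geodesic vertex, so that visibility genuinely requires $diam_H(B)\le 2$ rather than merely that $B$ be a mutual-visibility set of $H$.
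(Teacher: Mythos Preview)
Your proof is correct and follows essentially the same approach as the paper's: both first argue that $B$ cannot straddle $V(H_v)$ and the other copies (via the gateway role of $v$), then handle the two resulting cases with the same geodesic analyses. The only differences are organizational: you factor out the auxiliary facts (i), (ii), and the gateway description of geodesics between distinct copies as preliminaries and then treat the forward and reverse implications separately, whereas the paper establishes these facts inline and argues each of the two cases biconditionally.
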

\begin{proof}
Suppose that $S = \{v\} \cup B$ is a mutual-visibility set of $G \odot H$. Then, either $B \subseteq V(H_v)$ or $B \subseteq \cup_{w \in V(G)\setminus \{v\}} V(H_w)$, since every shortest path from $a \in V(H_v)$ to $b \in \cup_{w \in V(G)\setminus \{v\}} V(H_w)$ passes through $v \in S$, which contradicts the mutual-visibility of $S$.

\medskip
\noindent
\textbf{Case 1}: If $B \subseteq V(H_v)$, then every shortest path between any two vertices of $H_v$ in $G \odot H$ lies entirely within the induced subgraph $(G \odot H)[\{v\}\cup V(H_v)]$. Therefore, two vertices $a,b\in B$ are $S$-visible in $G \odot H$ if and only if they are  $B$-visible in $H_v$ and $d_{H_v}(a,b)\le 2$. Indeed, if $d_{H_v}(a,b)\ge 3$, then the unique geodesic between $a$ and $b$ in $G \odot H$ is $(a,v,b)$, whose internal vertex $v$ lies in $S$, contradicting mutual-visibility. Moreover, $v$ and $b \in B$ are $S$-visible, since they are adjacent in $G \odot H$.  Hence, in this case, $\{v\}\cup B$ is a mutual-visibility set of $G\odot H$ if and only if $B$ is a mutual-visibility set of $H_v$ with $\operatorname{diam}_{H_v}(B)\le 2$.

\medskip
\noindent \textbf{Case 2}: Assume that $B \subseteq \cup_{w \in V(G)\setminus \{v\}} V(H_w)$ and $S = \{v\} \cup B$ is a mutual-visibility set of $G \odot H$. Let $a, b \in B$. Then there exist a shortest $(a, b)$-path $P$ such that $V(P) \cap S \subseteq \{a, b\}$. Therefore, $a$ and $b$ are $\{v\}$-visible, hence $B$ is a $\{v\}$-visible subset of $G \odot H$.

Conversely assume that $B \subseteq \cup_{w \in V(G)\setminus \{v\} } V(H_w)$ and $\{v\}$-visible subset of $G \odot H$. Let $a, b \in B$ be non-adjacent vertices. Either both $a$ and $b$ belong to the same copy $H_g$ where $g\in V(G)$, and $ g\neq v$, or to two different copies. In the first case, $P = (a,g,b)$, and $g \neq v$, is a shortest $(a,b)$-path, so $a$ and $b$ are $S$-visible. In the second case, suppose $a \in V(H_{v_1})$ and $b \in V(H_{v_2})$ where $v_1,v_2\in V(G)\setminus \{v\}$. We claim that every shortest $(a,b)$-path has all its internal vertices in $V(G)$. Let $P'$ be a shortest path which enters some copy $H_t$ at a vertex $t \in V(G)$ and visits a vertex $z \in V(H_t)$. It must then leave again through $t$. The path obtained from $P'$ by contracting the subpath $(t,\ldots,t)$ to the single vertex $t$ has length at least two less than $P'$, which contradicts the minimality of $P'$. Hence the claim. By the $\{v\}$-visibility of $B$ there exists a shortest $(a,b)$-path $P$ in $G \odot H$ that avoids $v$. Hence the internal vertices of $P$ lie in $V(G)\setminus\{v\} = V(G)\setminus S$, so $P$ avoids internal vertices of $S$. Therefore, $a$ and $b$ are $S$-visible. Moreover, any shortest $(v,b)$-path $P$, where $b \in B$, satisfies $V(P) \cap S = \{v,b\}$. Therefore, the set $\{v\} \cup B$ is a mutual-visibility set of $G \odot H$. This completes the proof.
\end{proof}

\begin{lemma}\label{P2.lem5}
  Let $A \subseteq V(G)$ where $|A| \geq 2$ and let $\ B\subseteq \cup_{w \in V(G) } V(H_w)$. If $B$ contains at least one vertex of $\cup_{a \in A } H_a$, then $A \cup B$ is not a mutual-visibility set of $G \odot H$.
\end{lemma}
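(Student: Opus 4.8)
The plan is to exhibit a single pair of vertices of $S := A \cup B$ that fails to be $S$-visible. By hypothesis there is a vertex $b \in B$ lying in some copy $H_a$ with $a \in A$; since $|A| \geq 2$, I fix another vertex $a' \in A \setminus \{a\}$. Both $b$ and $a'$ belong to $S$. The heart of the argument is to show that $a$ is an internal vertex of \emph{every} shortest $(b,a')$-path in $G \odot H$. Since $a \in A \subseteq S$ and, as I will note, $a \notin \{b,a'\}$, this forces $b$ and $a'$ not to be $S$-visible, and hence $S$ is not a mutual-visibility set of $G \odot H$.

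To justify that every $(b,a')$-path passes through $a$, I would invoke the structure of the corona: for any vertex $x \in V(H_a)$, its neighborhood in $G \odot H$ is contained in $V(H_a) \cup \{a\}$, so $a$ separates $V(H_a)$ from $V(G \odot H) \setminus (V(H_a) \cup \{a\})$. Because $a' \in V(G)$ while $V(H_a)$ is disjoint from $V(G)$, we have $a' \notin V(H_a) \cup \{a\}$, so any $(b,a')$-walk, in particular any geodesic, must use $a$. Moreover $a \neq b$ (one lies in $G$, the other in a copy of $H$) and $a \neq a'$ by choice, and in fact $d_{G \odot H}(b,a') = d_{G \odot H}(b,a) + d_{G \odot H}(a,a') \geq 2$, so $a$ genuinely occurs strictly between the endpoints of every $(b,a')$-geodesic.

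I do not anticipate a real obstacle here: the statement reduces to the elementary observation that the attaching vertex $a$ is a cut vertex (or at least a separating vertex) isolating the copy $H_a$. The only point requiring a little care is the bookkeeping that the separating vertex $a$ is simultaneously distinct from both endpoints $b$ and $a'$ and a member of $S$ — this is precisely what converts "$a$ lies on every geodesic" into "the geodesic meets $S$ in an internal vertex," and thereby into the failure of mutual visibility.
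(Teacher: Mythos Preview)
Your proposal is correct and follows essentially the same approach as the paper: pick $b\in B\cap V(H_a)$ for some $a\in A$, choose a second vertex $a'\in A\setminus\{a\}$, and observe that $a$ lies on every shortest $(b,a')$-path because $a$ separates $H_a$ from the rest of $G\odot H$. The paper's proof is slightly terser (it simply asserts that the shortest path passes through the attaching vertex), while you spell out the separation argument explicitly, but the logic is identical.
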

\begin{proof}
Let $b \in B \cap \left( \cup_{a \in A} V(H_a) \right)$. Then $b \in V(H_g)$ for some $g \in A$. Since $|A| \geq 2$, there exists a vertex $c \in A$ other than $g$. Let $P$ be a shortest $(b,c)$-path in $G \odot H$. Then $P$ passes through $g$, and hence $V(P) \cap A \supseteq \{c,g,b\}$.
Therefore, $V(P) \cap (A \cup B) \supseteq \{c, g, b\}$, and hence $A \cup B$ is not a mutual-visibility set of $G \odot H$.
\end{proof}
\begin{figure}[h]
    \centering
\begin{tikzpicture}[scale=0.9, every node/.style={circle,draw,fill=black!4,inner sep=1pt,minimum size=5pt},line width=0.8pt]

  \foreach \i in {1,...,6} {
    \ifnum\i=1
      \node[fill=red] (v\i) at ({360/6*(\i-1)+30}:1) {$\scriptstyle\mathrm{1}$};
    \else\ifnum\i=2
      \node[fill=red] (v\i) at ({360/6*(\i-1)+30}:1) {$\scriptstyle\mathrm{2}$};
    \else
      \node (v\i) at ({360/6*(\i-1)+30}:1) {$\scriptstyle \mathrm{\i}$};
    \fi\fi
  }
  \foreach \i in {1,...,6} {
    \pgfmathtruncatemacro\nexti{mod(\i,6)+1}
    \draw (v\i) -- (v\nexti);
  }
  \foreach \i in {1,...,6} {
    \pgfmathsetmacro{\ang}{360/6*(\i-1)+30}
    \pgfmathsetmacro{\prev}{\ang+18}
    \pgfmathsetmacro{\next}{\ang-18}
    \def\outdist{1.45}
    \ifnum\i=3
      \node[fill=red] (p\i a) at (\prev:\outdist) {};
      \node (p\i b) at (\next:\outdist) {};
      \node[draw=none, fill=none, left=2pt] at (p\i a) {$b$};
    \else
      \node (p\i a) at (\prev:\outdist) {};
      \node (p\i b) at (\next:\outdist) {};
    \fi
    \draw (v\i) -- (p\i a);
    \draw (v\i) -- (p\i b);
    \draw (p\i a) -- (p\i b);
  }
\end{tikzpicture}
    \caption{The graph $C_6 \odot K_2$}
    \label{Ex_1}
\end{figure}
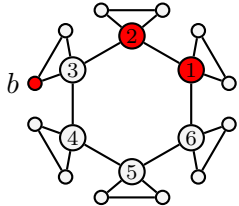
The converse is not true. For example, consider $G = C_6 = (1,2,3,4,5,6)$ and $H = K_2$. Let $A = \{1,2\}$ and let $B \subseteq V(H_3)$. In this case, $B \cap \left(\cup_{a \in A} V(H_a)\right) = \emptyset$. Let $b \in B$. Then the vertices $1$ and $b$ are not $A \cup B$-visible, since every shortest $(1,b)$-path passes through $2 \in A$. Hence, $A \cup B$ is not a mutual-visibility set of $C_6 \odot K_2$.%
\begin{lemma}\label{P2.lem6}
    Let $G$ be a graph with $|V(G)| \geq 2$. If $M$ is a mutual-visibility set of $G \odot H$ and $h_u, h_v \in M$, where $h_u \in V(H_u) , h_v\in V(H_v) , u \neq v$, then $\{u,v\} \cap M=\emptyset$.
\end{lemma}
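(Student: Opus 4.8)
The plan is to exploit the fact that, because $|V(G)|\ge 2$, the vertex $v$ separates the copy $H_v$ from the rest of $G\odot H$. Concretely, from the definition of the corona, every vertex of $H_v$ is adjacent only to $v$ and to (some) vertices of $H_v$; no vertex of $H_v$ has a neighbour in $V(G\odot H)\setminus(V(H_v)\cup\{v\})$. Hence any path of $G\odot H$ with one endpoint in $V(H_v)$ and the other endpoint outside $V(H_v)\cup\{v\}$ must contain $v$. The symmetric statement holds for $u$ and $H_u$.

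First I would apply this to the pair $h_u,h_v$. Since $h_v\in V(H_v)$ while $h_u\in V(H_u)$ with $u\ne v$, the vertex $h_u$ lies outside $V(H_v)\cup\{v\}$, so every $(h_u,h_v)$-path contains $v$; symmetrically, every $(h_u,h_v)$-path contains $u$. Also $u\ne v$, and $u,v\in V(G)$ are distinct from $h_u,h_v\in\bigcup_{w\in V(G)}V(H_w)$, since these two vertex sets are disjoint in $G\odot H$. Therefore both $u$ and $v$ occur as internal vertices of every $(h_u,h_v)$-path.

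To finish, I would use that $M$ is a mutual-visibility set of $G\odot H$ containing $h_u$ and $h_v$: there exists a shortest $(h_u,h_v)$-path $P$ with $V(P)\cap M\subseteq\{h_u,h_v\}$. By the preceding paragraph, $u$ and $v$ are internal vertices of $P$, hence $u,v\in V(P)\setminus\{h_u,h_v\}$, which forces $u\notin M$ and $v\notin M$. Thus $\{u,v\}\cap M=\emptyset$.

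There is no real obstacle here: the one point requiring (minimal) care is the separation claim, which is simply the observation that a copy $H_v$ is attached to $G$ only at $v$, together with the remark that mutual-visibility of $M$ merely supplies some shortest path avoiding internal members of $M$ — and this already suffices, since the two forced internal vertices $u$ and $v$ lie on every $(h_u,h_v)$-path, regardless of its length.
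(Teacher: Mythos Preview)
Your proof is correct and follows essentially the same approach as the paper's: both rely on the observation that $u$ (respectively $v$) separates $H_u$ (respectively $H_v$) from the remainder of $G\odot H$, so that every $(h_u,h_v)$-geodesic contains $u$ and $v$ as internal vertices. The paper phrases this as a brief proof by contradiction, whereas you give a direct argument with more detail on the separation claim, but the underlying idea is identical.
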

\begin{proof}
    Suppose $\{u,v\} \cap M \neq \emptyset$. Without loss of generality, we assume that $u \in M$. Then any shortest path from $h_u$ to $h_v$ contains $u$, which is a contradiction to the mutual-visibility of $M$.
\end{proof}

\begin{defn}
Let $u, v \in V(G)$. A vertex $g \in V(G)\setminus\{u, v\}$ is said to be a \emph{shortest-separator} with respect to $u$ and $v$ if every shortest $(u,v)$-path contains $g$. The collection of all shortest-separators is called the \emph{path-cut} of $G$, denoted by $p_c(G)$.

Let $A$ and $B$ be two disjoint subsets of $V(G)$. A vertex $g$ is said to be a \emph{set-separator} with respect to $A$ and $B$ if $g$ is a shortest-separator for every $u \in A$ and $v \in B$.
\end{defn}
The set of all cut vertices of a graph $G$ is a subset of $p_c(G)$. In the case of cycle graph, $p_c(C_n)= V(C_n)$ for $n \geq 5$.
\begin{lemma}\label{P2.lem7}
 Let $g$ be a set-separator with respect to the disjoint sets $A$ and $B$ of $V(G)$. If $S$ is a mutual-visibility set of $G$, containing $g$, then either $S \cap A =\emptyset$ or $S \cap B =\emptyset$. That is, either $S\subseteq \overline{A}$  or $S\subseteq \overline{B}$. 
\end{lemma}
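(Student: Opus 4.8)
The plan is a short argument by contradiction. First I would dispose of the degenerate cases: if $A=\emptyset$ then $S\cap A=\emptyset$ holds trivially, and likewise if $B=\emptyset$ then $S\cap B=\emptyset$; so from now on assume both $A$ and $B$ are nonempty.

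Next I would record the key structural observation that $g\notin A\cup B$. By the definition of set-separator, $g$ is a shortest-separator with respect to every pair $(u,v)$ with $u\in A$ and $v\in B$, and a shortest-separator with respect to $u$ and $v$ must by definition belong to $V(G)\setminus\{u,v\}$. Fixing any $b\in B$ and letting $u$ range over $A$ shows $g\neq a$ for all $a\in A$, hence $g\notin A$; symmetrically, fixing any $a\in A$ gives $g\notin B$.

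Now suppose toward a contradiction that $S\cap A\neq\emptyset$ and $S\cap B\neq\emptyset$, and choose $a\in S\cap A$ and $b\in S\cap B$. Since $a,b\in S$ and $S$ is a mutual-visibility set of $G$, the vertices $a$ and $b$ are $S$-visible, so there is a shortest $(a,b)$-path $P$ with $V(P)\cap S\subseteq\{a,b\}$. On the other hand, $g$ is a shortest-separator with respect to $a$ and $b$, so $g\in V(P)$; moreover $g\in S$ by hypothesis, and $g\notin\{a,b\}$ by the previous paragraph. Hence $g\in (V(P)\cap S)\setminus\{a,b\}$, contradicting $V(P)\cap S\subseteq\{a,b\}$. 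Therefore $S\cap A=\emptyset$ or $S\cap B=\emptyset$, and the equivalent formulation $S\subseteq\overline{A}$ or $S\subseteq\overline{B}$ follows at once, since $\overline{A}=V(G)\setminus A$.

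There is essentially no serious obstacle in this proof; the only point requiring care is the observation $g\notin A\cup B$, without which the membership $g\in V(P)$ would not produce a contradiction with $S$-visibility, together with the vacuous edge cases where $A$ or $B$ is empty.
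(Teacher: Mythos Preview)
Your proof is correct and follows essentially the same contradiction argument as the paper: assume $a\in S\cap A$ and $b\in S\cap B$, note that every shortest $(a,b)$-path passes through $g\in S$, and contradict mutual-visibility. You are simply more explicit than the paper about the degenerate cases $A=\emptyset$ or $B=\emptyset$ and about why $g\notin\{a,b\}$, both of which the paper leaves implicit.
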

\begin{proof}
    Suppose $S$ contains elements $a \in A$ and $b \in B$. Since $g$ is a set-separator with respect to $A$ and $B$, every shortest $(a,b)$-path contains $g \in S$. This contradicts mutual-visibility of $S$.
\end{proof}
In Lemma \ref{P2.lem3}, we characterized the mutual-visibility sets of $G\odot H$ that contain a single vertex from $G$ and at least one vertex from $\cup_{w \in V(G)}H_w$. The following lemma addresses the case when a mutual-visibility set contains more than one vertex from $G$.
\begin{lemma}\label{P2.lem8}
Let $G$ and $H$ be two graphs. If $S$ is a mutual-visibility set of $G \odot H$ and $|S \cap V(G)|$ contains at least two vertices, then $S \subseteq (S \cap V(G)) \cup \left( \cup_{w \in V(G)\setminus S} V(H_w) \right)$. In particular, if $|V(H)| \ge 2$, then $|S| < |V(G)|\,|V(H)|$. Moreover, $S \cap V(G)$ is a mutual-visibility set of $G$.
\end{lemma}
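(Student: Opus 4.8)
The plan is to reduce everything to Lemma~\ref{P2.lem5} together with a short counting argument. Write $A = S \cap V(G)$ and $B = S \setminus A$, so that $S = A \cup B$, $A \subseteq V(G)$, $B \subseteq \cup_{w \in V(G)} V(H_w)$, and by hypothesis $|A| \ge 2$. First I would observe that $B$ cannot meet any copy $H_a$ with $a \in A$: if it did, then $A \cup B$ would contain at least two vertices of $V(G)$ and at least one vertex of $\cup_{a \in A} V(H_a)$, so Lemma~\ref{P2.lem5} would say that $A \cup B = S$ is not a mutual-visibility set of $G \odot H$, contradicting the hypothesis. Hence $B \subseteq \cup_{w \in V(G)\setminus A} V(H_w)$. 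Since $S \cap V(G) = A$, we have $V(G)\setminus A = V(G)\setminus S$, and therefore $S = A \cup B \subseteq (S \cap V(G)) \cup \left( \cup_{w \in V(G)\setminus S} V(H_w) \right)$, which is the first assertion.

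For the cardinality bound, assume $|V(H)| \ge 2$. From the containment just established, $|B| \le |V(G)\setminus A|\,|V(H)| = (|V(G)| - |A|)\,|V(H)|$, so
\[
|S| = |A| + |B| \le |A| + (|V(G)| - |A|)\,|V(H)| = |V(G)|\,|V(H)| - |A|\,(|V(H)| - 1).
\]
Since $|A| \ge 2$ and $|V(H)| - 1 \ge 1$, the subtracted term is at least $2$, which gives $|S| \le |V(G)|\,|V(H)| - 2 < |V(G)|\,|V(H)|$.

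Finally, to see that $A = S \cap V(G)$ is a mutual-visibility set of $G$, I would first note that any subset of a mutual-visibility set is again a mutual-visibility set: a shortest path witnessing $S$-visibility of a pair has no internal vertex in $S$, hence a fortiori no internal vertex in any subset of $S$. In particular $A \subseteq S$ is a mutual-visibility set of $G \odot H$, and since $A \subseteq V(G)$, Lemma~\ref{P2.lem1} immediately yields that $A$ is a mutual-visibility set of $G$. Alternatively one can argue directly: for $a, b \in A$, an $S$-visible shortest $(a,b)$-path in $G \odot H$ uses no vertex of $\cup_{w \in V(G)} V(H_w)$ (as in the proof of Lemma~\ref{P2.lem1}), hence lies entirely in $G$, has the same length as a $G$-geodesic, and avoids internal vertices of $A$. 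I do not expect a genuine obstacle here; the only points requiring a little care are the bookkeeping identity $V(G)\setminus S = V(G)\setminus(S \cap V(G))$ and checking that the smallest cases $|A| = 2$, $|V(H)| = 2$ already force the strict inequality.
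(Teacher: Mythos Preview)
Your proof is correct and, in fact, more direct than the paper's. The paper proceeds by repeatedly applying Lemma~\ref{P2.lem7} (the set-separator lemma): for each $g_i \in S \cap V(G)$ it observes that $g_i$ is a set-separator between $V(H_{g_i})$ and the rest of the graph, and then iteratively prunes the possible location of $S$ one $g_i$ at a time until the desired containment emerges. You instead invoke Lemma~\ref{P2.lem5} once in contrapositive form, which immediately forces $B \cap \bigl(\cup_{a\in A} V(H_a)\bigr) = \emptyset$ and hence the containment. This is cleaner, since Lemma~\ref{P2.lem5} is already tailored to exactly this configuration and there is no need to pass through the more general separator machinery. The cardinality computation and the final appeal to Lemma~\ref{P2.lem1} (via the observation that subsets of mutual-visibility sets are again mutual-visibility sets) are handled the same way in both arguments.
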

\begin{proof}
Let $S$ be a mutual-visibility set of $G \odot H$ and let $S \cap V(G) = \{g_1, \ldots, g_k\}$ with $k \geq 2$. Note that $g_1$ is a set-separator with respect to $ (V(G) \setminus \{g_1\}) \cup \left( \cup_{w \in V(G)\setminus \{g_1\}} V(H_w) \right)$ and $V(H_{g_1})$. By Lemma~\ref{P2.lem7}, either $S$ is contained in the complement of $( V(G) \setminus \{g_1\}) \cup \left( \cup_{w \in V(G)\setminus \{g_1\}} V(H_w) \right)$ or in the complement of $V(H_{g_1})$. That is, $S\subseteq \{g_1\} \cup V(H_{g_1})$ or $S \subseteq V(G) \cup \left( \cup_{w \in V(G)\setminus \{g_1\}} V(H_w) \right)$.  Since $S \cap V(G) = \{g_1, \ldots, g_k\}$ it follows that either   $S\subseteq \{g_1\} \cup V(H_{g_1})$ or $S \subseteq \{g_1, \ldots, g_k\} \cup \left( \cup_{w \in V(G)\setminus \{g_1\}} V(H_w) \right)$.  Since $S \supseteq \{g_1, \ldots, g_k\}$, the first case is not possible, and thus the second case must hold. That is,
\begin{equation}\label{P2.eq1}
    S \subseteq \{g_1, \ldots, g_k\} \cup \left( \cup_{w \in V(G)\setminus \{g_1\}} V(H_w) \right)
\end{equation}

Next, observe that $g_2$ is a set-separator with respect to $(V(G) \setminus \{g_2\}) \cup \left( \cup_{w \in V(G)\setminus \{g_1, g_2\}} H_w \right)$ and $H_{g_2}$. Again, by Lemma~\ref{P2.lem7} and the inclusion relation~\eqref{P2.eq1}, it follows that either  $S \subseteq \{g_2\} \cup H_{g_1} \cup H_{g_2}$ or $S \subseteq \{g_1, \ldots, g_k\} \cup \left( \cup_{w \in V(G)\setminus \{g_1, g_2\}} H_w \right)$. In fact, the second case must hold. Proceeding in this manner for each $g_i$, we eventually obtain $S \subseteq \{g_1, \ldots, g_k\} \cup \left( \cup_{w \in V(G)\setminus \{g_1, \ldots, g_k\}} V(H_w) \right)$. Hence, $S \subseteq (S \cap V(G)) \cup \left( \cup_{w \in V(G)\setminus S} V(H_w) \right)$, since $V(G)\setminus \{g_1, \ldots, g_k\} = V(G) \setminus S$.

Consider, $|\{g_1, \ldots, g_k\} \cup \left( \cup_{w \in V(G)\setminus \{g_1, \ldots, g_k\}} V(H_w) \right)|= k+(|V(G)|-k)|V(H)|= |V(G)||V(H)|-k(|V(H)|-1) < |V(G)||V(H)| $, when $|V(H)|\geq 2$. Therefore, $|S| < |V(G)||V(H)|$. Moreover, $S \cap V(G)$ is a mutual-visibility set of $G \odot H$, since it is a subset of the mutual-visibility set $S$. By Lemma \ref{P2.lem1}, $S \cap V(G)$ is a mutual-visibility set of $G$.
\end{proof}
\begin{theorem}\label{P2.th1}
    Let $G$ and $H$ be two graphs, each with at least two vertices. Then $\mu(G\odot H)=|V(G)||V(H)|$.
\end{theorem}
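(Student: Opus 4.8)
The plan is to establish the two inequalities $\mu(G\odot H)\ge |V(G)|\,|V(H)|$ and $\mu(G\odot H)\le |V(G)|\,|V(H)|$ separately. For the lower bound I would simply take the set $S=\bigcup_{w\in V(G)}V(H_w)$, which has cardinality $|V(G)|\,|V(H)|$ and is a mutual-visibility set of $G\odot H$ by Lemma~\ref{P2.lem2}; hence $\mu(G\odot H)\ge |V(G)|\,|V(H)|$.

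For the upper bound, let $M$ be an arbitrary mutual-visibility set of $G\odot H$ and split into three cases according to the size of $M\cap V(G)$. If $M\cap V(G)=\emptyset$, then $M\subseteq\bigcup_{w\in V(G)}V(H_w)$, so $|M|\le|V(G)|\,|V(H)|$ trivially. If $|M\cap V(G)|\ge 2$, then Lemma~\ref{P2.lem8} gives $|M|<|V(G)|\,|V(H)|$ directly, using $|V(H)|\ge 2$. The remaining case, $M\cap V(G)=\{v\}$ for some $v\in V(G)$, is handled via Lemma~\ref{P2.lem3}: in its first alternative $M\setminus\{v\}\subseteq V(H_v)$, so $|M|\le 1+|V(H)|\le 2|V(H)|\le|V(G)|\,|V(H)|$; in its second alternative $M\setminus\{v\}\subseteq\bigcup_{w\in V(G)\setminus\{v\}}V(H_w)$, so $|M|\le 1+(|V(G)|-1)|V(H)|\le|V(G)|\,|V(H)|$, where the last inequality uses $|V(H)|\ge 1$. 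Combining the three cases yields $|M|\le|V(G)|\,|V(H)|$ for every mutual-visibility set $M$, hence $\mu(G\odot H)\le|V(G)|\,|V(H)|$, and together with the lower bound this gives the claimed equality.

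I do not expect a serious obstacle here: each case reduces to a previously established lemma combined with the elementary inequalities coming from $|V(G)|\ge 2$ and $|V(H)|\ge 2$. The only point deserving a little care is checking that the two bounds arising in the case $|M\cap V(G)|=1$ are genuinely dominated by $|V(G)|\,|V(H)|$ under the standing hypothesis that both graphs have at least two vertices; in fact one sees along the way that the maximum is attained only by the set $\bigcup_{w\in V(G)}V(H_w)$, which meets no vertex of $G$, while every other mutual-visibility set is strictly smaller. It may be worth recording this remark explicitly, since it identifies the extremal configuration.
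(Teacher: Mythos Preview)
Your argument is correct. The lower bound is identical to the paper's, and for the upper bound your three-case split on $|M\cap V(G)|$ is sound; the only trivial edge case you glide over is $M=\{v\}$ (where $B=\emptyset$ and Lemma~\ref{P2.lem3} is not literally applicable), but then $|M|=1$ and the bound is immediate.

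The route, however, differs from the paper's. The paper does not use Lemma~\ref{P2.lem3} here at all; instead it argues by contradiction: a hypothetical mutual-visibility set $S$ with $|S|>|V(G)|\,|V(H)|$ must (by Lemma~\ref{P2.lem6}) miss some vertex of $\bigcup_w V(H_w)$, and a pigeonhole count then forces $|S\cap V(G)|\ge 2$ together with the existence of some $g\in S\cap V(G)$ for which $V(H_g)\subseteq S$; this violates the \emph{containment} conclusion of Lemma~\ref{P2.lem8}, rather than its cardinality bound. Your approach is the direct case analysis that the paper in fact deploys only in the \emph{next} theorem (showing the visibility polynomial is monic), and it has the advantage of simultaneously establishing uniqueness of the extremal set, as you observe in your final remark. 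The paper's proof, on the other hand, avoids invoking Lemma~\ref{P2.lem3} and handles the upper bound with a single counting step once Lemma~\ref{P2.lem6} is in hand.
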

\begin{proof}
By Lemma~\ref{P2.lem2}, $\mu(G \odot H) \ge |V(G)||V(H)|$, since 
$M = \cup_{v \in V(G)} V(H_v)$ is a mutual-visibility set of $G \odot H$. 
By Lemma~\ref{P2.lem6}, any proper superset of $M$ is not a mutual-visibility set. Suppose $S \subseteq V(G \odot H)$ with $M \nsubseteq S$ and $|S| > |V(G)||V(H)|$. 
Let $r = |M \setminus S| \ge 1$. Then $S$ contains at least $r+1$ vertices of $G$, that is, $|S \cap V(G)| \ge r+1 \ge 2$. If, for every $g \in S \cap V(G)$, at least one vertex of $V(H_g)$ is omitted from $S$, it follows that $|M \setminus S| \ge |S \cap V(G)| \ge r+1$, a contradiction. Thus, there exists $g \in S \cap V(G)$ such that $V(H_g) \subseteq S$. Therefore, $S \nsubseteq (S \cap V(G)) \cup \left( \cup_{w \in V(G)\setminus S} V(H_w) \right)$, and by Lemma~\ref{P2.lem8}, $S$ is not a mutual-visibility set. 

Therefore, the maximum size of a mutual-visibility set of $G \odot H$ is $|V(G)||V(H)|$. This completes the proof.
\end{proof}

\begin{theorem}
  Let $G$ and $H$ be two graphs, each with at least two vertices, then the visibility polynomial of $G \odot H$ is a monic polynomial.    
\end{theorem}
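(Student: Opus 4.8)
The plan is to show that $\mathcal{V}(G\odot H)$ has degree $|V(G)|\,|V(H)|$ and that the coefficient of $x^{|V(G)|\,|V(H)|}$ equals $1$; since every coefficient $r_i$ is a count of sets and all higher coefficients vanish, this is exactly what ``monic'' requires here.

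First I would record that for any graph $K$ the degree of $\mathcal{V}(K)$ equals $\mu(K)$: a maximum mutual-visibility set exists, so $r_{\mu(K)}\ge 1$, and $r_i=0$ for $i>\mu(K)$ by the definition of $\mu$. Combined with Theorem~\ref{P2.th1} this gives $\deg\mathcal{V}(G\odot H)=|V(G)|\,|V(H)|$, so it remains to prove that $G\odot H$ has a \emph{unique} mutual-visibility set of cardinality $|V(G)|\,|V(H)|$. By Lemma~\ref{P2.lem2} the set $M=\cup_{w\in V(G)}V(H_w)$ is one such set, so the task reduces to showing it is the only one.

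Next I would suppose $S$ is a mutual-visibility set of $G\odot H$ with $|S|=|V(G)|\,|V(H)|$ and $S\ne M$, aiming for a contradiction. Since $V(G\odot H)$ is the disjoint union of $V(G)$ and $M$, and $|S|=|M|$, we have $S\cap V(G)=S\setminus M$ and $r:=|S\cap V(G)|=|M\setminus S|\ge 1$. If $r\ge 2$, then Lemma~\ref{P2.lem8} forces $S\subseteq (S\cap V(G))\cup\left(\cup_{w\in V(G)\setminus S}V(H_w)\right)$, a set of size $r+(|V(G)|-r)|V(H)|=|V(G)|\,|V(H)|-r(|V(H)|-1)$, which is strictly smaller than $|V(G)|\,|V(H)|$ because $|V(H)|\ge 2$ and $r\ge 2$; this contradicts $|S|=|V(G)|\,|V(H)|$.

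The remaining case $r=1$ is the borderline not handled by the proof of Theorem~\ref{P2.th1}, and I expect it to be the only point needing real care. Here $S\cap V(G)=\{g\}$ and $M\setminus S=\{h\}$, so $S=\{g\}\cup B$ with $B=M\setminus\{h\}$, and I would invoke Lemma~\ref{P2.lem3}. Since $|V(G)|\ge 2$ and $|V(H)|\ge 2$, deleting the single vertex $h$ from $M$ leaves a set $B$ that still meets at least two distinct copies $H_w$, hence $B$ lies in no single copy, ruling out condition~1; and $B\cap V(H_g)$ equals $V(H_g)\setminus\{h\}$ or $V(H_g)$, which is nonempty because $|V(H)|\ge 2$, so $B\not\subseteq\cup_{w\ne g}V(H_w)$, ruling out condition~2. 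Therefore $\{g\}\cup B$ is not a mutual-visibility set, a contradiction. Hence $S=M$, so $r_{|V(G)|\,|V(H)|}=1$ and $\mathcal{V}(G\odot H)$ is monic. The only subtlety to watch is the bookkeeping in the $r=1$ case, where the hypotheses $|V(G)|\ge 2$ and $|V(H)|\ge 2$ are precisely what make $B$ spread across copies and overlap $H_g$.
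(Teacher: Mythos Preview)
Your proof is correct and follows essentially the same route as the paper: you split according to $|S\cap V(G)|$ and invoke Lemma~\ref{P2.lem8} for the case $\ge 2$ and Lemma~\ref{P2.lem3} for the singleton case, exactly as the paper does. The only cosmetic difference is that the paper bounds $|S|$ directly from Lemma~\ref{P2.lem3} (obtaining $|S|\le 1+|V(H)|$ or $|S|\le 1+(|V(G)|-1)|V(H)|$), while you fix $|S|=|V(G)|\,|V(H)|$ and argue that the resulting specific $B=M\setminus\{h\}$ satisfies neither alternative of Lemma~\ref{P2.lem3}; this is the same argument, just contrapositively.
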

\begin{proof}
 In Theorem \ref{P2.th1}, we proved that $M=\cup_{v \in V(G)} V(H_v)$ is a mutual-visibility set of $G \odot H$ of size $|V(G)| |V(H)| = \mu (G \odot H)$. Let $S$ be a mutual-visibility set of $G \odot H$ other than $M$. If $S \cap V(G) = \emptyset$ then $|S| < |M|= \mu (G \odot H)$. Now consider the case $S \cap V(G) \neq \emptyset$. If $S \cap V(G)=\{v\}$, then by Lemma \ref{P2.lem3}, $|S| \leq 1+|V(H)|$ or $|S| \leq 1+ |\cup_{w \in V(G)\setminus \{v\} } V(H_w)|=1+ (|V(G)|-1)|V(H)|$. In both cases $|S| < |V(G)| |V(H)|$. If  $|S \cap V(G)| \geq 2$, then by Lemma \ref{P2.lem8}, $|S| < |V(G)| |V(H)|$. Therefore, $G \odot H$ has only one mutual-visibility set of size $\mu (G \odot H)$. Hence, the result.  
\end{proof}
In \cite{Stefano}, G.~D. Stefano characterized the graphs satisfying $\mu(G) = |V|$ as follows:
\begin{lemma}[\cite{Stefano}]\label{P2.lem10}
Let $G=(V,E)$ be a graph such that $|V|=n$. Then $\mu(G)=|V|$ if and only if $G\cong K_n$.
\end{lemma}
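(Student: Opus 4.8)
The plan is to derive both implications directly from the definition of a mutual-visibility set, applied to the full vertex set $X = V(G)$. The first thing I would record is the elementary equivalence that $\mu(G) = n$ holds if and only if $V(G)$ itself is a mutual-visibility set of $G$; this is immediate, since $\mu(G)$ is by definition the maximum cardinality of a mutual-visibility set, and the only subset of $V(G)$ of cardinality $n$ is $V(G)$ itself.

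For the forward implication, I would argue by contradiction: suppose $\mu(G) = n$ but $G \not\cong K_n$. Then there exist distinct non-adjacent vertices $u, v \in V(G)$, and since $G$ is connected we have $d_G(u,v) \geq 2$. Consequently every shortest $(u,v)$-path has length at least two and therefore contains at least one internal vertex $w$, which lies in $V(G) = X$. Hence the pair $\{u,v\}$ is not $V(G)$-visible, contradicting the equivalence recorded above. Therefore $G \cong K_n$.

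For the converse, assume $G \cong K_n$. Then any two distinct vertices $u, v$ are adjacent, so the edge $uv$ is a geodesic whose set of internal vertices is empty; thus $u$ and $v$ are trivially $V(G)$-visible. Since this holds for every pair, $V(G)$ is a mutual-visibility set of cardinality $n$, and hence $\mu(G) = n$.

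The argument requires no computation, so there is no genuine obstacle; the only step needing a word of justification is the opening equivalence, and that is routine. Since the statement is attributed to \cite{Stefano}, one could alternatively simply cite that source in place of a proof.
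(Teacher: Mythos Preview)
Your proof is correct and is the standard elementary argument. The paper itself does not supply a proof of this lemma at all: it is stated with an attribution to \cite{Stefano} and used as a black box, so there is no in-paper proof to compare against. Your closing remark that one could simply cite that source is exactly what the authors do.
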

\begin{theorem}
     If $G= K_1$ and $H$ is a graph of order $m$, then  $$\mu(G\odot H)=\left\{
  \begin{array}{ll}
  m+1 & \text{if } H \text{is complete} \\[5pt]
 m  & \text{if } H \text{is non-complete}
  \end{array}
\right.$$
\end{theorem}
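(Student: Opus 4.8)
The plan is to reduce the statement to Lemma~\ref{P2.lem10} (Di Stefano's characterization of graphs with $\mu(G)=|V|$) together with the easy lower bound coming from Lemma~\ref{P2.lem2}. First I would unpack the hypothesis: writing $V(K_1)=\{v\}$, the graph $K_1\odot H$ is obtained from a single copy $H_v$ of $H$ by adding the vertex $v$ and joining it to every vertex of $H_v$; that is, $K_1\odot H$ is the \emph{cone} over $H$, with vertex set $\{v\}\cup V(H_v)$ of cardinality $m+1$. I would then split into the two cases of the statement according to whether $H$ is complete.

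In the complete case $H\cong K_m$, the cone $K_1\odot H$ is itself complete: $v$ is adjacent to all of $H_v$ by construction, and all pairs inside $H_v$ are adjacent since $H_v\cong K_m$, so $K_1\odot H\cong K_{m+1}$. Lemma~\ref{P2.lem10} then gives $\mu(K_1\odot H)=\mu(K_{m+1})=m+1$, as required. In the non-complete case I would first establish the upper bound: since $H$ is not complete there exist $a,b\in V(H_v)$ that are non-adjacent in $H_v$ and hence non-adjacent in $K_1\odot H$, so $K_1\odot H$ is a graph on $m+1$ vertices that is not isomorphic to $K_{m+1}$. By Lemma~\ref{P2.lem10} this forces $\mu(K_1\odot H)\neq m+1$, i.e.\ $\mu(K_1\odot H)\le m$.

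For the matching lower bound in the non-complete case, I would invoke Lemma~\ref{P2.lem2} with $G=K_1$: the set $S=V(H_v)=\bigcup_{w\in V(K_1)}V(H_w)$ is a mutual-visibility set of $K_1\odot H$, and $|S|=m$, so $\mu(K_1\odot H)\ge m$. Combining the two inequalities yields $\mu(K_1\odot H)=m$, completing the proof. There is no substantial obstacle here; the only point that warrants care is that Theorem~\ref{P2.th1} cannot be used, since it assumes $|V(G)|\ge 2$, so the value $m=|V(K_1)|\,|V(H)|$ must be recovered independently — which is precisely what the combination of Lemmas~\ref{P2.lem2} and~\ref{P2.lem10} accomplishes.
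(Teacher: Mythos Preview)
Your proposal is correct and follows essentially the same approach as the paper: both split into the complete/non-complete cases, identify $K_1\odot K_m\cong K_{m+1}$ for the first case, and in the second case exhibit $V(H_v)$ as a mutual-visibility set of size $m$ while ruling out size $m+1$. The only cosmetic difference is that you cite Lemma~\ref{P2.lem2} and Lemma~\ref{P2.lem10} explicitly for the lower and upper bounds, whereas the paper re-derives their content inline (showing directly that $(u,v_1,v)$ witnesses mutual visibility of $V(H)$, and that the full vertex set fails because every $u$--$v$ geodesic passes through $v_1$).
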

\begin{proof}
If $H$ is a complete graph, then $G \odot H$ is also a complete graph, and $V(G \cup H)$ becomes a mutual-visibility set of $G \odot H$ by Lemma~\ref{P2.lem10}. If $H$ is a non-complete graph and $V(G) = \{v_1\}$, then $V(H)$ is a mutual-visibility set of $G \odot H$ since, for any non-adjacent vertices $u, v \in V(H)$, the path $P = (u, v_1, v)$ is a shortest path from $u$ to $v$ with $V(P) \cap V(H) = \{u, v\}$. Moreover, $V(H) \cup V(G)$ is not a mutual-visibility set of $G \odot H$ since every shortest path from $u$ to $v$ contains $v_1$. Hence, the result follows.
\end{proof}
\begin{defn}
Let $Q \subseteq V(G)$. A subset $W$ of $\overline{Q} = V(G) \setminus Q$ is said to be co-visible with respect to $Q$ if $W$ is $Q$-visible and $\{u, w\}$ is $Q$-visible for all $u \in Q$ and $w \in W$. We abbreviate a co-visible set with respect to $Q$ as a $c_Q$-visible set. A $c_Q$-visible set is said to be maximal if it is not a proper subset of any larger $c_Q$-visible set.

Furthermore, a $c_Q$-visible set $W$ is called an absolute $c_Q$-visible set of $G$ if $Q$ is also a mutual-visibility set of $G$. That is, $W$ is an absolute $c_Q$-visible set if $Q \cup W$ is $Q$-visible. A maximal absolute $c_Q$-visible set is denoted by $\Omega_Q(G)$.
\end{defn}
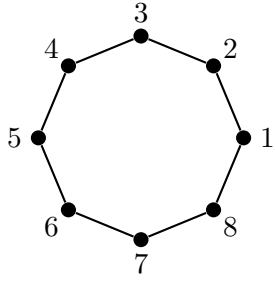
\begin{figure}[h]
    \centering
\begin{tikzpicture}
\begin{scope}[rotate=0, scale=0.9]
    \def\n{8}
    \def\radius{1.5}
    \def\labelradius{1.85} 
    \foreach \i in {1,...,\n} {    
      \node[circle, fill=black, inner sep=2pt] (R\i) at ({360/\n * (\i -1)}:\radius) {};
      \node at ({360/\n * (\i -1)}:\labelradius) {\i};
    }
    \foreach \i in {1,...,\n} {
      \pgfmathtruncatemacro\nexti{mod(\i,\n) + 1}
      \draw[line width=0.8pt] (R\i) -- (R\nexti);
    }
\end{scope}
\end{tikzpicture}
 \caption{The cycle $C_8$}
    \label{Omega_q}
\end{figure}
Note that $\Omega_Q(G)$ need not be unique. For example, consider the cycle graph $C_8$ as depicted in Figure \ref{Omega_q}. If $Q=\{1\}$, then the maximal absolute $c_Q$-visible sets are $\{2,3,4,5,6\}, \{3,4,5,6,7\}$ and $\{4,5,6,7,8\}$. Furthermore, for a fixed $Q$, the maximal absolute $c_Q$-visible sets ($\Omega_Q(G)$) may have different cardinality. For example, let $Q=\{1, 3\}$. The maximal absolute $c_Q$-visible sets are $\{2\}$ and $\{5, 6, 7\}$. If $Q$ has more than two vertices, then there is no $c_Q$-visible set.

The contrapositive of Lemma~\ref{P2.lem5} can be stated as follows: Let $A \subseteq V(G)$ with $|A| \geq 2$ and $B \subseteq \cup_{w \in V(G)} V(H_w)$. If $A \cup B$ is a mutual-visibility set of $G \odot H$, then $B \cap \left(\cup_{a \in A} V(H_a)\right) = \emptyset$, or equivalently, $B \subseteq \cup_{w \in \overline{A}} V(H_w)$. The following lemma characterizes such mutual-visibility sets.
\begin{lemma}\label{P2.lam9}
  Let $Q$ be a proper subset of $V(G)$, $\overline{Q}= V(G)\setminus Q$ and $ \emptyset \neq S_{\overline{Q}} \subseteq \cup_{w \in \overline{Q} } V(H_w)$. A set $S=Q \cup S_{\overline{Q}}$ is a mutual-visibility set of $G \odot H$ if and only if $W=\{w \in \overline{Q} \big/ S \cap V(H_w) \neq \emptyset\}$ is a non-empty absolute $c_Q$-visible set of $G$.
\end{lemma}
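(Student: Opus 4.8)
The plan is to prove the biconditional by carefully unpacking what mutual-visibility of $S = Q \cup S_{\overline{Q}}$ means for each type of pair of vertices, and matching this up with the defining conditions of an absolute $c_Q$-visible set. First I would set up notation: write $W = \{w \in \overline{Q} : S \cap V(H_w) \neq \emptyset\}$, and note that $S \cap V(G) = Q$ (since $S_{\overline{Q}}$ contains no vertices of $G$), so $S \cap V(G)$ being a mutual-visibility set of $G$ is equivalent — via Lemma~\ref{P2.lem1} and Lemma~\ref{P2.lem8}, or directly — to $Q$ being a mutual-visibility set of $G$, which is exactly the ``absolute'' part of the hypothesis.

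The key observation I would isolate first is a distance/geodesic fact in $G \odot H$: for $a \in V(H_{v_1})$ and $b \in V(H_{v_2})$ with $v_1 \neq v_2$, every shortest $(a,b)$-path has the form $(a, v_1, \ldots, v_2, b)$ where $(v_1, \ldots, v_2)$ is a geodesic in $G$; and for $a \in V(H_v)$, $g \in V(G)$ with $g \neq v$, every shortest $(a,g)$-path is $(a, v, \ldots, g)$ with $(v, \ldots, g)$ a geodesic in $G$. This is essentially the contraction argument already used in the proof of Lemma~\ref{P2.lem3} (a geodesic entering a copy $H_t$ through $t$ must leave through $t$, so it can be shortened). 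With this in hand, the internal vertices of any geodesic between two ``pendant'' vertices, or between a pendant vertex and a $G$-vertex, lie in $V(G)$ — so whether such a pair is $S$-visible depends only on whether the corresponding $G$-geodesic avoids $Q = S \cap V(G)$.

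Now I would check the pairs case by case. Pairs within $Q$: these are $S$-visible iff $Q$ is a mutual-visibility set of $G$ (since their geodesics stay in $G$), giving the absolute part. Pairs within $S_{\overline{Q}}$ lying in the same copy $H_w$ (necessarily $w \in \overline{Q}$): always $S$-visible via $(a, w, b)$ since $w \notin Q \subseteq S$ and $a,b$ are the only $S$-vertices on it — so no constraint arises here. Pairs in $S_{\overline{Q}}$ lying in copies $H_{w_1}, H_{w_2}$ with $w_1 \neq w_2$ (so $w_1, w_2 \in W$): $S$-visible iff some $G$-geodesic from $w_1$ to $w_2$ avoids $Q$, i.e. iff $w_1, w_2$ are $Q$-visible in $G$; as this must hold for all such pairs, this is exactly the statement that $W$ is $Q$-visible. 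Finally, pairs $\{g, b\}$ with $g \in Q$ and $b \in S \cap V(H_w)$, $w \in W$: if $w = g$ this is impossible since $g \in Q$ but $w \in \overline{Q}$; so $w \neq g$, the geodesic is $(b, w, \ldots, g)$, and it is $S$-visible iff some $G$-geodesic from $w$ to $g$ avoids $Q$, i.e. $\{w, g\}$ is $Q$-visible in $G$ — quantifying over all $g \in Q$ and $w \in W$ gives precisely the second defining clause of a $c_Q$-visible set. Conversely, assembling these equivalences shows that if $Q$ is a mutual-visibility set of $G$ and $W$ is a non-empty $c_Q$-visible set, then every pair in $S$ is $S$-visible. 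The nonemptiness of $W$ corresponds to $S_{\overline{Q}} \neq \emptyset$.

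The main obstacle I anticipate is the bookkeeping in the reverse direction: one must verify that $Q$-visibility of the \emph{set} $W$ (a geodesic avoiding all of $Q$) together with pairwise $Q$-visibility of $\{g,w\}$ pairs really does suffice, being careful that ``$W$ is $Q$-visible'' is a statement about pairs $w_1, w_2 \in W$ with geodesics avoiding the whole of $Q$, which is exactly what is needed for the pendant-pendant pairs in $S$ since their internal $G$-vertices must avoid $Q = S \cap V(G)$, not just $\{w_1, w_2\}$. Once the geodesic-structure lemma is cleanly stated, each case is short; the care needed is only in matching quantifiers and confirming that no pair type imposes a hidden extra condition beyond the three clauses (absolute, $Q$-visible, pairwise-visible-from-$Q$) in the definition.
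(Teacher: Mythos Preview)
Your proposal is correct and follows essentially the same approach as the paper's proof: a case-by-case analysis of pairs in $S$ (pairs within $Q$, pairs within the same $H_w$, pairs in different copies $H_{w_1},H_{w_2}$, and mixed pairs $g\in Q$, $b\in V(H_w)$), reducing each via the geodesic-structure observation to the corresponding clause in the definition of an absolute $c_Q$-visible set. The paper writes out the two implications separately rather than phrasing each case as an ``iff'', but the content and the key steps are the same.
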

\begin{proof}
    $( \Rightarrow)$ Suppose $S=Q \cup S_{\overline{Q}}$ is a mutual-visibility set of $G\odot H$. Since $Q \subseteq S$, it is also a mutual-visibility set of $G\odot H$. Then by Lemma \ref{P2.lem1}, $Q$ is a mutual-visibility set of $G$.

      Let $u, v \in W$, $h_u \in S \cap V(H_u)$ and $h_v \in S \cap V(H_v)$. Since $S$ is a mutual-visibility set of $G\odot H$, $\{h_u, h_v\}$ is $S$-visible. Therefore, there exists a shortest path of the form $P=(h_u,u,\ldots, v, h_v)$ from $h_u$ to $h_v$ such that $V(P) \cap S \subseteq \{h_u, h_v\}$. Let $P'=P \setminus \{h_u, h_v\}$. Then $P'$ is a shortest $(u,v)$-path in $G$ such that $V(P') \cap S = \emptyset$. Consequently, $V(P') \cap Q = \emptyset \subset \{u, v\}$, and hence $W$ is $Q$-visible.

      Let $u \in Q$ and $v \in W$. Since $S \cap V(H_v) \neq \emptyset$, there exist $h_v \in V(H_v)$ such that $u$ and $h_v$ are $S$-visible. Let $P$ be a shortest $(u, h_v)$-path such that $V(P) \cap S \subseteq \{u, h_v\}$. Then $P'=P\setminus \{h_v\}$ is a shortest $(u, v)$-path in $G$ such that $V(P') \cap S \subseteq \{u\}$. Consequently, $V(P') \cap Q \subseteq \{u\} \subset \{u,v\}$. Therefore, $u$ and $v$ are $Q$-visible for all $u \in Q$ and $v \in W$. Hence $W$ is an absolute $c_Q$-visible set of $G$.

$( \Leftarrow)$ If $W=\{w \in \overline{Q} \big/ S \cap V(H_w) \neq \emptyset\}$ is a non-empty absolute $c_Q$-visible set of $G$, then  $Q$ is a mutual-visibility set of $G$. Let $u, v \in Q$. Then there exists a shortest $(u, v)$-path $P$ in $G$ such that $V(P) \cap Q \subseteq \{u, v\}$. Consequently, $V(P) \cap S \subseteq \{u, v\}$, since $V(P) \cap \left(\cup_{w \in \overline{Q}} V(H_w)\right) \subseteq \{u, v\}$. Therefore, $Q$ is a $S$-visible set.

Let $u, v \in S_{\overline{Q}}$. Then $u,v \in V(H_w)$ for some $w \in W$ or $u \in V(H_{w_1})$ and $v \in V(H_{w_2})$ for $w_1, w_2 \in W$. In the first case, $P = (u, v)$ or $P = (u, w, v)$ is a shortest $(u, v)$-path, and $V(P) \cap S = \{u, v\}$. In the latter case, $w_1$ and $w_2$ are $Q$-visible. Then there exists a shortest $(w_1, w_2)$-path $P$ in $G$ such that $V(P) \cap Q = \emptyset$. Since $P$ is a path in $G$, we have $V(P) \cap \left( \cup_{w \in \overline{Q}} V(H_w) \right) = \emptyset$. Therefore, $V(P) \cap S = \emptyset$, where $S = Q \cup S_{\overline{Q}}$. Let $P' = (u, P, v)$ denote the path obtained by adding a pendant vertex $u$ to one endpoint of $P$ and a pendant vertex $v$ to the other. Then $P'$ is a shortest $(u, v)$-path such that $V(P') \cap S = \{u, v\}$. It follows that $S_{\overline{Q}}$ is $S$-visible.

Let $u \in Q$ and $v \in S_{\overline{Q}}$. Then there exists $w \in W$ such that $v \in V(H_w)$. Since $u$ and $w$ are $Q$-visible in $G$, there exists a shortest $(u, w)$-path $P$ in $G$ such that $V(P) \cap Q = \{u\}$. Let $P' = (P, v)$ denote the path obtained by adding a pendant vertex $v$ to $P$ at its endpoint. Then $P'$ is a shortest $(u, v)$-path such that $V(P') \cap S = \{u, v\}$. It follows that $S$ is a mutual-visibility set of $G \odot H$.
\end{proof}
\begin{lemma}\label{P2.lem13}
Let $G = C_n$ with $n \geq 3$, and let $ Q \subseteq V(G)$. A $c_Q$-visible set of $C_n$ exists if and only if $|Q| \leq 2$.
\end{lemma}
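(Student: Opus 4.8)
The plan is to read ``$c_Q$-visible set'' as a \emph{non-empty} one, since the empty set vacuously satisfies the definition; this reading is consistent with the remark preceding the statement and with the hypothesis of Lemma~\ref{P2.lam9}. I would prove the two implications separately, using throughout that in $C_n$ the only simple path options between two given vertices are the two arcs joining them, so every geodesic is one of these two arcs.

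\emph{Sufficiency} ($|Q|\le 2$). If $Q=\emptyset$, then $W=V(C_n)$ works because all three defining conditions are vacuous. If $Q=\{v\}$, take any $w\neq v$ and set $W=\{w\}$: ``$W$ is $Q$-visible'' is vacuous and ``$\{v,w\}$ is $Q$-visible'' holds because $v$ is an endpoint of every $(v,w)$-path. The substantive case is $|Q|=2$, say $Q=\{u,v\}$; I would again take $W=\{w\}$ for a single carefully chosen $w$, so that only ``$\{u,w\}$ and $\{v,w\}$ are $Q$-visible'' needs checking, and I split on whether $uv\in E(C_n)$. If $uv\notin E(C_n)$, let $P=(u=p_0,p_1,\dots,p_d=v)$ be a $(u,v)$-geodesic with $d\ge 2$ and put $w=p_1\in\overline Q$; then $uw$ is an edge, so $\{u,w\}$ is $Q$-visible, and $(p_1,\dots,p_d)$ is a geodesic from $w$ to $v$ avoiding $u$, so $\{v,w\}$ is $Q$-visible. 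If $uv\in E(C_n)$, then $\overline Q$ induces a path $(z_1,\dots,z_b)$ with $b=n-2\ge 1$, $z_1\sim u$ and $z_b\sim v$, and the cyclic order around $C_n$ is $u,z_1,\dots,z_b,v$; I would take $w=z_j$ with $j=\lceil b/2\rceil$, noting $1\le j\le b$. Then $(u,z_1,\dots,z_j)$ has length $j$ and avoids $v$, and it is a geodesic since its complementary arc $(u,v,z_b,\dots,z_j)$ has length $b+2-j$ and the inequality $j\le b+2-j$ is equivalent to $j\le b/2+1$, which holds for $j=\lceil b/2\rceil$; symmetrically $(v,z_b,\dots,z_j)$ has length $b-j+1$ and avoids $u$, and it is a geodesic since $b-j+1\le j+1$ is equivalent to $j\ge b/2$. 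Hence $\{u,w\}$ and $\{v,w\}$ are $Q$-visible, and $W=\{w\}$ is a non-empty $c_Q$-visible set.

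\emph{Necessity} ($|Q|\ge 3$). Suppose, for contradiction, that $W\neq\emptyset$ is a $c_Q$-visible set and fix $w\in W\subseteq\overline Q$. Traversing $C_n$ from $w$ in a fixed direction meets the $m=|Q|\ge 3$ vertices of $Q$ in some order $q_1,q_2,\dots,q_m$. Consider $q_2$. The two arcs of $C_n$ with endpoints $w$ and $q_2$ are the only candidates for a $(q_2,w)$-geodesic, and one of them has $q_1$ as an internal vertex while the other has $q_3$ (here $m\ge 3$ is used, and $q_1,q_3\ne w$ because $w\notin Q$). Hence every $(q_2,w)$-geodesic contains a vertex of $Q\setminus\{q_2\}$ internally, so $\{q_2,w\}$ is not $Q$-visible, contradicting that $W$ is $c_Q$-visible.

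I expect the adjacent sub-case of $|Q|=2$ to be the main obstacle: one must single out the correct vertex of the long arc and verify that the two exhibited paths really are geodesics, which forces the ceiling bookkeeping above and a uniform treatment of the tied/antipodal situations and of the smallest cycles $C_3$ and $C_4$. By contrast, the necessity direction and the cases $|Q|\le 1$ are essentially immediate once the arc structure of $C_n$ is invoked.
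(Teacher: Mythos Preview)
Your proof is correct and follows essentially the same approach as the paper: singleton witnesses for $|Q|\le 2$ (with the ``midpoint'' of the long arc when $Q$ consists of two adjacent vertices), and for $|Q|\ge 3$ the observation that both arcs from any $w\in\overline Q$ to a suitably chosen vertex of $Q$ are blocked by other $Q$-vertices. The only difference is cosmetic: you supply the explicit geodesic verification $j\le b/2+1$ and $j\ge b/2$ in the adjacent subcase, whereas the paper simply names the vertex $g_{(n+3)/2}$ or $g_{n/2+1}$ and asserts it works; and in the necessity direction the paper phrases the blocking vertex as ``$y\in V(\overline P)\cap Q$'' where your $q_2$ plays the same role.
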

\begin{proof}
If $Q = \emptyset$, then $\overline{Q} = V(G)$ is trivially a $c_Q$-visible set. Suppose $|Q| = 1$. In this case, any singleton subset of $\overline{Q}$ is a $c_Q$-visible set. Now let $Q = \{g_1, g_2\}$. If $g_1$ and $g_2$ are not adjacent, then any singleton set containing an intermediate vertex on a shortest $(g_1, g_2)$-path is a $c_Q$-visible set. If $g_1$ and $g_2$ are adjacent, represent the cycle  $C_n$ as $ (g_1, g_2, \ldots, g_n)$. If $n$ is odd, then $\{g_{(n+3)/2}\}$ is a $c_Q$-visible set and if $n$ is even, then $\{g_{(n/2)+1}\}$ is a $c_Q$-visible set.

Now suppose $|Q| \geq 3$. Let $x \in \overline{Q}$. Then $x$ lies on exactly one path $P$ between two vertices in $Q$ such that $P$ does not contain any other vertex of $Q$. Let $y \in V(\overline{P}) \cap Q$; such a $y$ exists since $|Q| \geq 3$. Every shortest $(x,y)$-path contains one of the endpoints of $P$. Therefore, no subset of $\overline{Q}$ is $Q$-visible, and hence no $c_Q$-visible set exists when $|Q| \geq 3$. This completes the proof.
\end{proof}
\begin{prop}\label{P2.prop1}
For $n \geq 3$, let $A \subseteq V(C_n)$ with $|A| \geq 2$, and let $\emptyset \neq B \subseteq \cup_{w \in V(C_n)} V(H_w)$.  
If $S = A \cup B$ is a mutual-visibility set of $C_n \odot H$, then $|A| = 2$.
\end{prop}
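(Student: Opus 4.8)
The plan is to argue by contradiction, assuming $|A|\ge 3$, and to extract from the hypotheses a non-empty $c_A$-visible set of $C_n$, which is impossible by Lemma~\ref{P2.lem13}. Thus the whole argument is a short chaining of three results already established above, and the only real work is verifying that the hypotheses of each of them are literally met.

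The first step would be to localize $B$. Since $|A|\ge 2$ and $S=A\cup B$ is a mutual-visibility set of $C_n\odot H$, the contrapositive of Lemma~\ref{P2.lem5} (recorded just before Lemma~\ref{P2.lam9}) forces $B\cap\bigl(\cup_{a\in A}V(H_a)\bigr)=\emptyset$, i.e.\ $B\subseteq\cup_{w\in\overline{A}}V(H_w)$ where $\overline{A}=V(C_n)\setminus A$. Because $B\neq\emptyset$, this already yields $\overline{A}\neq\emptyset$, so $A$ is a proper subset of $V(C_n)$, and it also shows $S\cap V(C_n)=A$ exactly (the copies $H_w$ being vertex-disjoint from $V(C_n)$). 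One could equally obtain the same inclusion from Lemma~\ref{P2.lem8}, which in addition records that $A=S\cap V(C_n)$ is a mutual-visibility set of $C_n$.

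The second step is to apply Lemma~\ref{P2.lam9} with $Q=A$ and $S_{\overline{Q}}=B$: the required hypotheses ($A$ a proper subset of $V(C_n)$, and $\emptyset\neq B\subseteq\cup_{w\in\overline{A}}V(H_w)$) hold by the previous paragraph, so the lemma tells us that $W=\{w\in\overline{A}: S\cap V(H_w)\neq\emptyset\}$ is a non-empty absolute $c_A$-visible set of $C_n$; in particular $C_n$ admits a non-empty $c_A$-visible set. Finally, Lemma~\ref{P2.lem13} asserts that a $c_Q$-visible set of $C_n$ exists if and only if $|Q|\le 2$. Since we assumed $|A|\ge 3$, this is a contradiction, so $|A|\le 2$; combined with the standing hypothesis $|A|\ge 2$ we conclude $|A|=2$.

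I do not expect a substantive obstacle here: the argument is purely a composition of Lemmas~\ref{P2.lem5} (or \ref{P2.lem8}), \ref{P2.lam9}, and \ref{P2.lem13}. The one point that warrants care — and the reason the localization of $B$ inside $\cup_{w\in\overline{A}}V(H_w)$ is carried out first — is ensuring the non-emptiness and properness conditions in Lemmas~\ref{P2.lam9} and \ref{P2.lem13} are genuinely satisfied before those lemmas are invoked.
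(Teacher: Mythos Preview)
Your proposal is correct and follows essentially the same route as the paper's proof: use the contrapositive of Lemma~\ref{P2.lem5} to push $B$ into $\cup_{w\in\overline{A}}V(H_w)$, then invoke Lemma~\ref{P2.lam9} to produce a non-empty absolute $c_A$-visible set of $C_n$, and finish with Lemma~\ref{P2.lem13}. The only cosmetic difference is that you frame it as a contradiction from $|A|\ge 3$, whereas the paper argues directly to $|A|\le 2$; your extra care in verifying properness and non-emptiness before applying Lemma~\ref{P2.lam9} is welcome.
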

\begin{proof}
Assume that $S = A \cup B$ is a mutual-visibility set of $C_n \odot H$. Then, by the contrapositive of Lemma~\ref{P2.lem5}, it follows that $B \subseteq \cup_{w \in \overline{A}} V(H_w)$. By Lemma~\ref{P2.lam9}, the set $
W = \{w \in \overline{A} \mid S \cap V(H_w) \neq \emptyset\}$
is a non-empty absolute $c_A$-visible set of $C_n$. By Lemma~\ref{P2.lem13}, this is possible only if $|A| \leq 2$. Hence, $|A| = 2$.
\end{proof}
By combining Lemma~\ref{P2.lem3} and Proposition~\ref{P2.prop1}, we conclude the following result: If $A \cup B$ is a mutual-visibility set of $C_n \odot H$, where $A \subseteq V(C_n)$ and $B \subseteq \cup_{w \in V(C_n)} V(H_w)$, then $|A| \leq 2$. 
\begin{prop}\label{P2.prop5}
Let $G=C_n$ with $n\ge 3$, and $Q=\{a,b\}\subseteq V(G)$. 
If $a$ and $b$ are adjacent, then $Q$ has a unique maximal absolute $c_Q$-visible set.  If $a$ and $b$ are nonadjacent, then any two maximal absolute $c_Q$-visible sets are disjoint.
\end{prop}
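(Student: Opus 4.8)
The plan is to use heavily that $|Q|=2$, which collapses the statement to an explicit description of the vertices of $\overline{Q}$ that can occur in a $c_Q$-visible set. I would first record two trivialities. Since $|Q|=2$, any shortest $(a,b)$-path has no internal occurrence of $a$ or $b$, so $Q$ is automatically a mutual-visibility set of $C_n$; hence every $c_Q$-visible set is absolute and the word ``absolute'' may be dropped throughout. Next, writing $C_n=(g_1,\dots,g_n)$, let $Z$ be the set of all $w\in\overline{Q}$ for which both $\{a,w\}$ and $\{b,w\}$ are $Q$-visible. By the definition of a $c_Q$-visible set, every $c_Q$-visible set is contained in $Z$; conversely $Z$ is itself a $c_Q$-visible set as soon as it is pairwise $Q$-visible. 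So the whole argument reduces to computing $Z$ from simple index inequalities and verifying pairwise $Q$-visibility on it.

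For the adjacent case I would take $a=g_1$, $b=g_2$, so $\overline{Q}=\{g_3,\dots,g_n\}$. For $g_j$ with $3\le j\le n$ there are exactly two $(g_1,g_j)$-paths on the cycle, of lengths $j-1$ (which contains $b$) and $n-j+1$ (which avoids $b$); hence $\{a,g_j\}$ is $Q$-visible iff $n-j+1\le j-1$, i.e. $j\ge\lceil(n+2)/2\rceil$. Symmetrically, $\{b,g_j\}$ is $Q$-visible iff $j\le\lfloor(n+4)/2\rfloor$. These two bounds differ by at most $1$, so $Z$ is either a single vertex or a pair of consecutive (hence adjacent) vertices; in both cases $Z$ is trivially pairwise $Q$-visible, hence a $c_Q$-visible set, and as it contains every $c_Q$-visible set it is the unique maximal one.

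For the nonadjacent case I would take $a=g_1$, $b=g_k$ with $3\le k\le n-1$. Deleting $a$ and $b$ splits $C_n$ into the two nonempty arcs $A_1=\{g_2,\dots,g_{k-1}\}$ and $A_2=\{g_{k+1},\dots,g_n\}$, and since $\{a,b\}$ separates $A_1$ from $A_2$, every shortest path from a vertex of $A_1$ to a vertex of $A_2$ has an internal vertex in $Q$. Hence no $c_Q$-visible set meets both arcs, and every $c_Q$-visible set lies in $Z\cap A_1$ or in $Z\cap A_2$. It then remains to check that each $Z\cap A_i$ is itself $c_Q$-visible. On $A_1$ the visibility conditions become $j\le\lfloor n/2\rfloor+1$ (for $\{a,g_j\}$) and $j\ge\lceil k-n/2\rceil$ (for $\{b,g_j\}$); thus for $g_p,g_q\in Z\cap A_1$ with $p<q$ one has $q\le\lfloor n/2\rfloor+1$ and $p\ge k-n/2$, so $q-p\le n+1-k$, while $A_1$-membership gives $q-p\le k-3$; hence $q-p\le\tfrac12\bigl((n+1-k)+(k-3)\bigr)=\tfrac{n-2}{2}<\tfrac n2$. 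Consequently the arc-path $g_p,g_{p+1},\dots,g_q$ is the unique shortest $(g_p,g_q)$-path, it lies inside $A_1$, and so avoids $Q$; thus $g_p$ and $g_q$ are $Q$-visible, and $Z\cap A_1$ is $c_Q$-visible. The computation for $A_2$ is identical after relabelling, so $Z\cap A_1$ and $Z\cap A_2$ are precisely the maximal $c_Q$-visible sets, and they are disjoint since $A_1\cap A_2=\emptyset$.

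The part that needs care is the bookkeeping in the two computations of $Z$: for each candidate vertex one must identify which of the two cycle-arcs carries the relevant shortest path, treat ties correctly, and convert ``visible from $a$'' and ``visible from $b$'' into the clean index bounds used above. The conceptual pieces — the reduction to $Z$, and the fact that a nonadjacent pair is a vertex cut confining every $c_Q$-visible set to one arc — are immediate; the only real subtlety is checking that the bound $q-p<n/2$ genuinely forces the short arc-path to be a geodesic, so that the $Q$-visibility is witnessed by a path lying entirely within the arc.
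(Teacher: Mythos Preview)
Your proof is correct and follows essentially the same route as the paper's. Both arguments observe that $|Q|=2$ makes $Q$ automatically a mutual-visibility set, then separate $\overline{Q}$ into the two arcs determined by $\{a,b\}$, compute for each vertex the index/distance inequality characterizing $Q$-visibility from both $a$ and $b$, and finally verify pairwise $Q$-visibility on the resulting set by showing that any two of its vertices are at arc-distance strictly less than $n/2$, so the in-arc path is a geodesic avoiding $Q$. The only cosmetic difference is that the paper singles out the short arc $A$ and notes it is entirely $c_Q$-visible before restricting attention to the subset $B^\star$ of the long arc, whereas you treat both arcs uniformly via the set $Z$; the underlying inequalities and the key $<n/2$ bound are the same.
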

\begin{proof}
Let $P$ and $P'$ be the two $(a,b)$-paths in $C_n$, and set $k=|E(P)|$, $l=|E(P')|$, so $k+l=n$. Without loss of generality we assume that, $k\le l$ and let $A =V(P)\setminus\{a,b\}$ and  $B=V(P')\setminus\{a,b\}$. Note that $C_n\setminus Q$ has exactly the two components induced by $A$ and $B$. Since $|Q|=2$, $Q$ is a mutual-visibility set of $C_n$.  Thus, the adjective “absolute” is automatically satisfied for any $c_Q$-visible set.

\medskip
\noindent\textbf{Separation}: If $x\in A$ and $y\in B$, then every $(x,y)$-path meets $Q$ internally; hence $x$ and $y$ are not $Q$-visible. 
Therefore, any $c_Q$-visible set $W$ must be contained either in  $A$ or in $B$.

For $w\in B$, let $d_{P'}(a,w)$ be the length of the $P'$-subpath from $a$ to $w$. Then $d_{P'}(a,w)+d_{P'}(b,w)=l$, and the two $(a,w)$-paths in $C_n$ have lengths $d_{P'}(a,w)$ (along $P'$) and $k+d_{P'}(b,w)$ (via $P$ then along $P'$). Hence
\[
\text{$a$ and $w$ are $Q$-visible} \iff d_{P'}(a,w)\le k+d_{P'}(b,w).
\]
By symmetry, $b$ and $w$ are $Q$-visible if and only if $d_{P'}(b,w)\le k+d_{P'}(a,w)$ . Therefore, $w\in B$ is $Q$-visible if and only if
\begin{equation}\label{P2.eq2}
|d_{P'}(a,w)- d_{P'}(b,w)| \leq k 
\end{equation}

\medskip
\noindent\textbf{Case 1}: Suppose that $a$ and $b$ are adjacent ($k=1$).
Here $A=\emptyset$ and $B$ is the long $(a,b)$-path of length $l=n-1$. By the inequality \eqref{P2.eq2}, a vertex $w \in B$ is $Q$-visible only when $|d_{P'}(a,w)- d_{P'}(b,w)|\leq 1$, that is, the middle vertices of $B$. If $n$ is odd, there is only one such vertex, and two consecutive vertices if $n$ is even. In both cases, they are $Q$-visible. No other vertex of $B$ is $Q$-visible, so this set is the only possible nonempty $c_Q$-visible set; in particular, it is the unique maximal absolute $c_Q$-visible set.

\medskip
\noindent\textbf{Case 2}: Suppose that $a$ and $b$ are nonadjacent ($k\geq 2$). Then, $A$ itself is a $c_Q$-visible set: for $x, y\in A$ the unique shortest $(x,y)$-path is the subpath of $P$, and for $u\in Q$, $w\in A$ the $(u,w)$-subpath of $P$ is a shortest $(u,w)$-path. Hence, the internal vertices of all these paths avoid $Q$. Thus by separation remark, $A$ is a maximal absolute $c_Q$-visible set. 

Next we define $ B^\star =\{\,w\in B:\ |d_{P'}(a,w)- d_{P'}(b,w)|\leq k\,\}$. By the inequality \eqref{P2.eq2}, every $w\in B^\star$ is $Q$-visible. Let $x,y\in B^\star$ with $d_{P'}(a,x) < d_{P'}(a,y)$. Then the distance between $x$ and $y$ along $P'$ is $d_{P'}(a,y)-d_{P'}(a,x)$. Substituting $d_{P'}(b,w)= l- d_{P'}(a,w)$ in the inequality \eqref{P2.eq2}, we get $|2d_{P'}(a,w)- l|\leq k\,$. That is $d_{P'}(a,w)\in\big[\frac{l-k}{2},\ \frac{l+k}{2}\big]$ and,
\[
d_{P'}(x,y)=d_{P'}(a,y)-d_{P'}(a,x) \leq \frac{l+k}{2} - \frac{l-k}{2} = k \leq \frac{n}{2}.
\]
Hence $(x,y)$-path along $P'$ has length at most $\frac{n}{2}$, so it is the only one shortest $(x,y)$-path in $C_n$. Moreover, all its internal vertices are disjoint from $Q$. Therefore, $B^\star$ is $Q$-visible, and hence it is an absolute $c_Q$-visible set.

Maximality and uniqueness inside $B$ directly follow from the inequality \eqref{P2.eq2}: if $w\in B\setminus B^\star$, then at least one of pair $\{a, w\}$ or $\{b,w\}$ has all shortest paths passing through the other vertex of $Q$, so $w$ cannot belong to any $c_Q$-visible set. Thus $B^\star$ is the unique maximal $c_Q$-visible subset of $C_n$ in $B$. Finally, by the Separation remark, the maximal absolute $c_Q$-visible sets $A$ and $B^\star$ are disjoint. This completes the proof.
\end{proof}
\begin{prop}\label{P2.prop3}
    Let $\emptyset \neq Q \subseteq V(P_n)$, where $n \geq 4$. If $Q$ has more than one maximal absolute $c_Q$-visible set, then they are disjoint.
\end{prop}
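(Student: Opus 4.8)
The plan is to show that the hypothesis already forces $|Q|=1$, and then to identify the two maximal absolute $c_Q$-visible sets explicitly. Throughout, label the vertices of $P_n$ as $v_1,\ldots,v_n$ in path order, and recall that in $P_n$ the geodesic between any two vertices is the unique subpath joining them, so all shortest-path reasoning is completely rigid.

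First I would rule out $|Q|\ge 2$. If $|Q|\ge 3$, let $v_i$ and $v_j$ be the vertices of $Q$ of smallest and largest index and let $v_k$ be a third vertex of $Q$; then $i<k<j$, so $v_k$ is an internal vertex of the unique $(v_i,v_j)$-path. Hence $Q$ is not a mutual-visibility set of $P_n$, so by definition $P_n$ has no absolute $c_Q$-visible set and the claim is vacuous. If $|Q|=2$, write $Q=\{v_i,v_j\}$ with $i<j$; deleting $Q$ leaves the (possibly empty) segments $L=\{v_1,\ldots,v_{i-1}\}$, $M=\{v_{i+1},\ldots,v_{j-1}\}$, $R=\{v_{j+1},\ldots,v_n\}$. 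For $v_\ell\in L$ the unique $(v_j,v_\ell)$-path runs through $v_i\in Q$, so $\{v_j,v_\ell\}$ is not $Q$-visible and $v_\ell$ lies in no $c_Q$-visible set; symmetrically no vertex of $R$ lies in one. Since $M$ is readily checked to be $c_Q$-visible (pairs within $M$ are joined by subpaths inside $M$, and each $w\in M$ is joined to $v_i$ or to $v_j$ by a subpath meeting $Q$ only at its endpoint), and $Q$ is a mutual-visibility set here, $M$ is the unique maximal absolute $c_Q$-visible set, so the hypothesis again fails.

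For the remaining case $|Q|=1$, say $Q=\{v_k\}$, deleting $v_k$ leaves $L=\{v_1,\ldots,v_{k-1}\}$ and $R=\{v_{k+1},\ldots,v_n\}$. Any path joining a vertex of $L$ to a vertex of $R$ passes through $v_k$, so every $Q$-visible subset of $\overline{Q}$ lies wholly in $L$ or wholly in $R$. Conversely, any subset of $L$ or of $R$ is $c_Q$-visible: pairs within a segment are joined by subpaths avoiding $v_k$, each vertex $w$ of the segment is joined to $v_k$ by a subpath meeting $Q$ only at the endpoint $v_k$, and $Q$ (a singleton) is trivially a mutual-visibility set. Thus the maximal absolute $c_Q$-visible sets are precisely the non-empty members of $\{L,R\}$; if there are two of them, they are $L$ and $R$, which are disjoint by construction, as required.

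I expect no genuine obstacle here: the only point requiring care is the bookkeeping of the case split, namely verifying that "more than one maximal absolute $c_Q$-visible set" can occur only when $|Q|=1$. Once that is pinned down, every verification is immediate from the uniqueness of shortest paths in $P_n$.
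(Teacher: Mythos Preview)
Your proof is correct and follows essentially the same approach as the paper: both proceed by case analysis on $|Q|$, rule out $|Q|\ge 2$ as producing at most one maximal absolute $c_Q$-visible set, and then handle $|Q|=1$ by identifying the two path segments $L$ and $R$. The only minor difference is that for $|Q|\ge 3$ you argue directly that $Q$ is not a mutual-visibility set (via the middle vertex on the geodesic), whereas the paper invokes the known formula $\mathcal{V}(P_n)=1+nx+\binom{n}{2}x^2$ from~\cite{sandi}; your version is slightly more self-contained.
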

\begin{proof}
In \cite{sandi}, B.~Csilla et al. showed that the visibility polynomial of a path graph of order $n \geq 2$ is given by $\mathcal{V}(P_n) = 1 + nx + \binom{n}{2}x^2
$. Therefore, $Q$ has no absolute $c_Q$-visible sets when $|Q| \geq 3$, and all subsets of size at most two are mutual-visibility sets of $P_n$. Let $P_n = (1,2,\ldots,n)$. If $Q = \{1\}$ or $Q = \{n\}$, then the maximal absolute $c_Q$-visible set is $\{2,3,\ldots,n\}$ or $\{1,2,\ldots,n-1\}$, respectively. If $Q = \{k\}$, where $1 < k < n$, then the maximal absolute $c_Q$-visible sets are $\{1,\ldots,k-1\}$ and $\{k+1,\ldots,n\}$.

Now, let $Q = \{a,b\}$. If $a$ and $b$ are adjacent in $P_n = (1,2,\ldots,n)$ and $a <b$, consider any $p \in \overline{Q}$. Then either $p < a$ or $p > b$. In the first case, the shortest $(p,b)$-path contains $a$, and in the latter case, the shortest $(a,p)$-path contains $b$. Therefore, no $c_Q$-visible set exists in this case. If $a$ and $b$ are not adjacent, then subsets of $\{a+1,\ldots,b-1\}$ are the only $c_Q$-visible sets. Moreover, if $W \subseteq \overline{Q}$ contains an element outside $\{a+1,\ldots,b-1\}$, say $q$, then either $\{q,b\}$ or $\{a,q\}$ is not $Q$-visible. Hence, $\{a+1,\ldots,b-1\}$ is the unique maximal absolute $c_Q$-visible set. This completes the proof.
\end{proof}
\begin{prop}\label{P2.prop4}
Let $\emptyset \neq Q \subseteq V(K_n)$, where $n \geq 3$. Then there exists a unique maximal absolute $c_Q$-visible set.
\end{prop}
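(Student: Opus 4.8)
The plan is to exploit the fact that in $K_n$ every pair of distinct vertices is adjacent, so all geodesics have length one and the notion of $Q$-visibility collapses dramatically. First I would record the key structural observation: if $u,v$ are adjacent in a graph, then the unique shortest $(u,v)$-path is the edge $uv$, which has no internal vertices, so $u$ and $v$ are automatically $X$-visible for every vertex set $X$. Since $K_n$ is complete, this means that \emph{any} pair of vertices is $X$-visible for every $X$; in particular every subset of $V(K_n)$ is a mutual-visibility set (this is consistent with Lemma~\ref{P2.lem10}), so $Q$ is automatically a mutual-visibility set and the adjective ``absolute'' imposes no extra condition.

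Next I would unwind the definition of a $c_Q$-visible set for $G=K_n$. A set $W\subseteq\overline{Q}$ must be $Q$-visible (every pair in $W$ is $Q$-visible) and $\{u,w\}$ must be $Q$-visible for all $u\in Q$, $w\in W$. By the adjacency observation, both conditions hold vacuously for \emph{every} $W\subseteq\overline{Q}$, since all relevant pairs are adjacent in $K_n$. Hence the collection of $c_Q$-visible sets is exactly the power set of $\overline{Q}$, and because all these sets are moreover absolute, the collection of absolute $c_Q$-visible sets is also the full power set of $\overline{Q}=V(K_n)\setminus Q$.

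The conclusion is then immediate: the unique maximal element of the power set of $\overline{Q}$, under inclusion, is $\overline{Q}$ itself. Therefore $\Omega_Q(K_n)=\overline{Q}=V(K_n)\setminus Q$ is the unique maximal absolute $c_Q$-visible set. (When $|Q|=n$ one has $\overline{Q}=\emptyset$, which is still vacuously the unique maximal such set; the hypothesis $n\ge 3$ and $Q\neq\emptyset$ simply ensures we are in a nondegenerate range, but the argument does not really need it.)

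There is essentially no obstacle here: the only thing to be careful about is stating precisely why adjacency makes $X$-visibility trivial --- namely that an edge is a shortest path with empty interior --- and then observing that completeness forces every pair under consideration to be adjacent. Once that is in place, the proposition reduces to the trivial fact that a power set has a unique maximal element.
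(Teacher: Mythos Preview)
Your proof is correct and follows essentially the same route as the paper's: both arguments use adjacency in $K_n$ to show that every pair of vertices is $Q$-visible, conclude that $\overline{Q}$ itself is an absolute $c_Q$-visible set, and then invoke the definitional constraint $W\subseteq\overline{Q}$ to obtain uniqueness of the maximal such set. The only cosmetic difference is that the paper cites an earlier result for the fact that every subset of $V(K_n)$ is a mutual-visibility set, whereas you derive this directly from the empty-interior observation.
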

\begin{proof}
We claim that $\overline{Q}$ is a $c_Q$-visible set. Since every two vertices in $\overline{Q}$ are adjacent, they are $Q$-visible. Similarly, any $q \in Q$ and $w \in \overline{Q}$ are $Q$-visible. In~\cite{VP_1}, the present authors proved that every subset of $V(K_n)$ is a mutual-visibility set of $K_n$. Therefore, $\overline{Q}$ is an absolute $c_Q$-visible set. By definition, a $c_Q$-visible set is a subset of $\overline{Q}$; hence $\overline{Q}$ is the unique maximal absolute $c_Q$-visible set.
\end{proof}
In Proposition~\ref{P2.prop5}, we established that if $|Q| = 2$ in a cycle, then the maximal absolute $c_Q$-visible sets are disjoint. However, this property does not hold in general. For example, consider the graph $G$ in Figure~\ref{mul_Omega_q} with $Q = \{4,6\}$. 
In this case, $\{1,2,5\}$ and $\{1,2,3\}$ are both maximal absolute $Q$-visible sets.
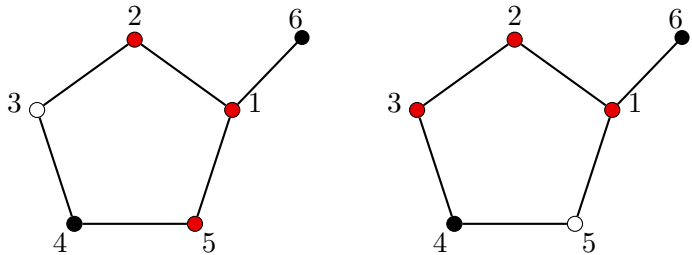
\begin{figure}[h]
    \centering
\begin{tikzpicture}
  \begin{scope}[xshift=5cm, rotate=18, scale=0.9]
    \def\n{5}
    \def\radius{1.5}
    \def\labelradius{1.85} 
    \foreach \i in {1,...,\n} {
      \ifnum\i=1
        \def\mycolor{black!10!red}
      \else\ifnum\i=2
        \def\mycolor{black!10!red}
      \else\ifnum\i=3
        \def\mycolor{black!10!red}
    \else\ifnum\i=5
        \def\mycolor{none}
      \else
        \def\mycolor{black}
      \fi\fi\fi\fi
    
      \node[circle, draw=black,fill=\mycolor, inner sep=2pt] (R\i) at ({360/\n * (\i -1)}:\radius) {};
      \node at ({360/\n * (\i -1)}:\labelradius) {\i};
    }
    \foreach \i in {1,...,\n} {
      \pgfmathtruncatemacro\nexti{mod(\i,\n) + 1}
      \draw[line width=0.8pt] (R\i) -- (R\nexti);
    }
    \coordinate (R6) at ($(R1) + (1.3,0.7)$);
    \node[circle, fill=black, inner sep=2pt] at (R6) {};
    \node at ($(R6) + (0,0.3)$) {6};
    \draw[line width=0.8pt] (R1) -- (R6);
  \end{scope}
  \begin{scope}[rotate=18,scale=0.9]
  \def\n{5}
    \def\radius{1.5}
    \def\labelradius{1.85} 
    \foreach \i in {1,...,\n} {
      \ifnum\i=1
        \def\mycolor{black!10!red}
      \else\ifnum\i=2
        \def\mycolor{black!10!red}
      \else\ifnum\i=5
        \def\mycolor{black!10!red}
    \else\ifnum\i=3
        \def\mycolor{none}
      \else
        \def\mycolor{black}
      \fi\fi\fi\fi
    
      \node[circle, draw=black,fill=\mycolor, inner sep=2pt] (R\i) at ({360/\n * (\i -1)}:\radius) {};
      \node at ({360/\n * (\i -1)}:\labelradius) {\i};
    }
    \foreach \i in {1,...,\n} {
      \pgfmathtruncatemacro\nexti{mod(\i,\n) + 1}
      \draw[line width=0.8pt] (R\i) -- (R\nexti);
    }
    \coordinate (R6) at ($(R1) + (1.3,0.7)$);
    \node[circle, fill=black, inner sep=2pt] at (R6) {};
    \node at ($(R6) + (0,0.3)$) {6};
    \draw[line width=0.8pt] (R1) -- (R6);
\end{scope}
\end{tikzpicture}
 \caption{The graphs $G$ with intersecting $\Omega_Q(G)$}
    \label{mul_Omega_q}
\end{figure}
\section{Visibility polynomial of the corona product of two graphs} \label{sec4}

Using a computing facility and a Python implementation, we verified that, for a given mutual-visibility set $Q$, the maximal absolute $c_Q$-visible sets are disjoint in all connected graphs of order $3$ and $4$. Furthermore, among the $21$ non-isomorphic connected graphs of order $5$, $18$ satisfy this property, and among the $112$ non-isomorphic connected graphs of order $6$, $73$ satisfy it. In addition, Propositions~\ref{P2.prop3} and~\ref{P2.prop4} guarantee the existence of infinitely many such graphs. This empirical evidence motivates the formal introduction of the following definition and a related result.
\begin{defn}
Let $G$ be a graph and let $ Q \subseteq V(G)$. The subset $Q$ is said to be disjoint-visible if, whenever $Q$ has more than one maximal absolute $c_Q$-visible set, these sets are pairwise disjoint; that is, the sets $\Omega_Q(G)$ are pairwise disjoint. A graph $G$ is called absolute-clear if every non-empty subset of $V(G)$ is disjoint-visible.
\end{defn}
The graph shown in Figure~\ref{mul_Omega_q} is not absolute-clear. However, the graph $G_1$, shown in Figure \ref{abs_clear_prob}, is absolute-clear.
\begin{lemma}\label{P2.lem11}
Let $G$ be an absolute clear graph, $\emptyset \neq Q \subsetneq V(G)$ and $\overline{Q}= V(G)\setminus Q$. Let $S=Q \cup S_{\overline{Q}}$, where $\emptyset \neq S_{\overline{Q}} \subseteq \cup_{w \in \overline{Q} } V(H_w)$, be a mutual-visibility set of $G \odot H$. The contribution to the visibility polynomial of $G \odot H$ corresponding to mutual-visibility sets of the form $S$ is 
$$\sum_{Q} \sum_{\Omega_Q(G)}\left((1+x)^{|\Omega_Q(G)||V(H)|}-1\right)x^{|Q|}$$
where $Q$ is a proper mutual-visibility set of $G$.
\end{lemma}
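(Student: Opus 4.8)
The plan is to count the mutual-visibility sets $S = Q \cup S_{\overline{Q}}$ by organizing them according to two choices: first the ``base'' set $Q$ in $G$, and then the set $W = \{w \in \overline{Q} : S \cap V(H_w) \neq \emptyset\}$ of copies actually used. By Lemma~\ref{P2.lam9}, $S$ is a mutual-visibility set of $G \odot H$ if and only if $Q$ is a mutual-visibility set of $G$ and $W$ is a non-empty absolute $c_Q$-visible set of $G$. So I would first fix a proper mutual-visibility set $Q$ of $G$ (note that $Q \subsetneq V(G)$ is forced since $S_{\overline{Q}} \neq \emptyset$ needs $\overline{Q} \neq \emptyset$), and then partition the admissible $W$'s according to which maximal absolute $c_Q$-visible set $\Omega_Q(G)$ contains them: since $G$ is absolute-clear, the various $\Omega_Q(G)$ are pairwise disjoint, so no non-empty $W$ is contained in two of them, and every admissible $W$ (being a subset of \emph{some} maximal one) is counted exactly once in $\sum_{\Omega_Q(G)}$.

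Next, with $Q$ and a maximal absolute $c_Q$-visible set $\Omega = \Omega_Q(G)$ fixed, I would count the sets $S$ for which the ``used'' copies $W$ satisfy $\emptyset \neq W \subseteq \Omega$. For each vertex $w \in \Omega$ we may choose any subset of $V(H_w)$ to put into $S_{\overline{Q}}$; this is valid because any subset of $\Omega$ is again a $c_Q$-visible set (a subset of a $Q$-visible set that is individually visible from each vertex of $Q$ retains both properties), so Lemma~\ref{P2.lam9} applies. Each of the $|\Omega|\,|V(H)|$ vertices lying in the copies $\{H_w : w \in \Omega\}$ is independently either in or out of $S$, giving $(1+x)^{|\Omega|\,|V(H)|}$ weighted choices, from which we subtract $1$ to exclude the empty choice (which would force $S_{\overline{Q}} = \emptyset$, contradicting the hypothesis). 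Multiplying by $x^{|Q|}$ to account for the vertices of $Q$ in $S$ yields the summand $\bigl((1+x)^{|\Omega_Q(G)||V(H)|}-1\bigr)x^{|Q|}$, and summing over all $\Omega_Q(G)$ and all proper mutual-visibility sets $Q$ of $G$ gives the claimed expression.

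The main obstacle is the double-counting bookkeeping, and this is precisely where absolute-clearness is essential. A given mutual-visibility set $S = Q \cup S_{\overline{Q}}$ determines $Q = S \cap V(G)$ and $S_{\overline{Q}}$ uniquely, hence determines $W$ uniquely; the only ambiguity is which $\Omega_Q(G)$ we assign it to. I need to argue that (i) every admissible $W$ lies in at least one $\Omega_Q(G)$ — true because $c_Q$-visible sets are closed under taking subsets, so $W$ extends to a maximal one — and (ii) it lies in at most one, which is exactly the disjointness guaranteed by $G$ being absolute-clear. Without (ii) the sets $S$ with $W$ contained in an intersection of two maximal $\Omega_Q(G)$ would be overcounted, and the formula would fail (as the graph in Figure~\ref{mul_Omega_q} illustrates). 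I would state (i) and (ii) as the two verification points and note that everything else is the routine generating-function product over independent binary choices.
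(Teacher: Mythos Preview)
Your proposal is correct and follows essentially the same approach as the paper: both invoke Lemma~\ref{P2.lam9} to reduce the count to proper mutual-visibility sets $Q$ of $G$, then for each $Q$ sum over the maximal absolute $c_Q$-visible sets $\Omega_Q(G)$ and count non-empty subsets of $\cup_{w\in\Omega_Q(G)}V(H_w)$ via the generating function $(1+x)^{|\Omega_Q(G)||V(H)|}-1$, using absolute-clearness to avoid double counting. Your treatment of the no-overcount\slash no-undercount verification (your points (i) and (ii)) is in fact more explicit than the paper's, which simply asserts disjointness and moves on.
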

\begin{proof}
Let $Q$ be a proper subset of $V(G)$ and $S=Q \cup S_{\overline{Q}}$ be a mutual-visibility set of $G \odot H$. By Lemma~\ref{P2.lam9}, it follows that $W = \{w \in \overline{Q} / S \cap V(H_w) \neq \emptyset\}$ is a non-empty absolute $c_Q$-visible set of $G$. Therefore, $Q$ is a proper mutual-visibility set of $G$. Again, by Lemma~\ref{P2.lam9}, the set $S = Q \cup S_{\overline{Q}}$, where $\emptyset \neq S_{\overline{Q}} \subseteq \cup_{w \in \overline{Q}} V(H_w)$, is a mutual-visibility set of $G \odot H$ if and only if $\emptyset \subsetneq S_{\overline{Q}} \subseteq \cup_{w \in \Omega_Q(G)} V(H_w)$, where $\Omega_Q(G)$ is a maximal absolute $c_Q$-visible set corresponding to $Q$. Thus, for every non-empty subset of $\cup_{w \in \Omega_Q(G)} V(H_w)$ of size $p$, where $1 \leq p \leq |\Omega_Q(G)| |V(H)|$, there exists a mutual-visibility set of $G \odot H$ of size $p + |Q|$. Therefore, the contribution to the visibility polynomial $\mathcal{V}(G \odot H)$ corresponding to $\Omega_Q(G)$ is $\left((1 + x)^{|\Omega_Q(G)||V(H)|} - 1\right) x^{|Q|}$. If  $Q \subseteq V(G)$ has more than one maximal $c_Q$-visible set, then they are disjoint, since $G$ is absolute-clear. Hence, the total contribution to $\mathcal{V}(G \odot H)$ corresponding to a mutual-visibility set $Q$ of $G$ is $\sum_{\Omega_Q(G)} \left((1 + x)^{|\Omega_Q(G)||V(H)|} - 1\right) x^{|Q|}$. Since $S_{\overline{Q}} \neq \emptyset$, it follows that $\Omega_Q(G)$ is non-empty. Note that, for a mutual visibility set $Q$, $\Omega_Q(G)$ may be empty, and in that case, the corresponding term in the sum becomes zero. This completes the proof.
\end{proof}
Suppose that the graph $G$ is not absolute-clear. Then there exists a subset $\emptyset \neq Q \subsetneq V(G)$ that is not disjoint-visible. Let $\Gamma_Q(G)$ denote the collection of all maximal absolute $c_Q$-visible sets, that is, $\Gamma_Q(G) = \{\Omega_Q(G)\}$. Since $Q$ is not disjoint-visible, at least two sets in $\Gamma_Q(G)$ have non-empty intersection. Using the principle of inclusion-exclusion, the contribution to the visibility polynomial $\mathcal{V}(G \odot H)$ corresponding to mutual-visibility sets of the form $S = Q \cup S_{\overline{Q}}$, where $\emptyset \neq S_{\overline{Q}} \subseteq \cup_{w \in \overline{Q}} V(H_w)$, is given by
$$
\sum_{\emptyset \neq \mathcal{J} \subseteq \Gamma_Q(G)} (-1)^{|\mathcal{J}|+1} \left((1+x)^{\left| \cap_{W \in \mathcal{J}} W \right||V(H)|}-1\right)x^{|Q|}
$$
Using this result, we can generalize Lemma~\ref{P2.lem11} as follows.

\begin{lemma}\label{P2.lem12}
Let $G$ be a graph, $\emptyset \neq Q \subsetneq V(G)$ and $\overline{Q}= V(G)\setminus Q$. Let $S=Q \cup S_{\overline{Q}}$, where $\emptyset \neq S_{\overline{Q}} \subseteq \cup_{w \in \overline{Q} } V(H_w)$, be a mutual-visibility set of $G \odot H$. The contribution to the visibility polynomial of $G \odot H$ corresponding to mutual-visibility sets of the form $S$ is $\sum_{Q} p_Q(x)$, where the sum is taken over all proper mutual-visibility sets $Q$ of $G$ and 
$$p_Q(x) = \left\{
  \begin{array}{ll}
  \sum_{\Omega_Q(G)}\left((1+x)^{|\Omega_Q(G)||V(H)|}-1\right)x^{|Q|}& \text{if } Q \text{ is disjoint-visible} \\[10pt]
  \sum_{\emptyset \neq \mathcal{J} \subseteq \Gamma_Q(G)} (-1)^{|\mathcal{J}|+1} \left((1+x)^{\left| \cap_{W \in \mathcal{J}} W \right||V(H)|}-1\right)x^{|Q|} & \text{Otherwise}
  \end{array}
\right.$$
\end{lemma}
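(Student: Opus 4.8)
The plan is to fix a proper mutual-visibility set $Q$ of $G$, identify all mutual-visibility sets of $G\odot H$ of the form $S=Q\cup S_{\overline{Q}}$ with $\emptyset\neq S_{\overline{Q}}\subseteq\bigcup_{w\in\overline{Q}}V(H_w)$, express their total contribution to $\mathcal{V}(G\odot H)$ as a single polynomial $p_Q(x)$, and then sum over $Q$. Summing over $Q$ causes no overcounting, because for every such $S$ one has $S\cap V(G)=Q$, so distinct choices of $Q$ produce disjoint families of mutual-visibility sets and the decomposition $S=Q\cup S_{\overline{Q}}$ is unique.

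First I would determine, for a fixed $Q$, exactly which $S_{\overline{Q}}$ can occur. By Lemma~\ref{P2.lam9}, $S=Q\cup S_{\overline{Q}}$ is a mutual-visibility set of $G\odot H$ if and only if $W=\{w\in\overline{Q}\ :\ S\cap V(H_w)\neq\emptyset\}$ is a non-empty absolute $c_Q$-visible set of $G$; in particular $Q$ must itself be a mutual-visibility set of $G$. Since every subset of an absolute $c_Q$-visible set is again one, and (by finiteness of $V(G)$) every $c_Q$-visible set is contained in a maximal one, this is equivalent to requiring $\emptyset\neq S_{\overline{Q}}\subseteq\bigcup_{w\in\Omega}V(H_w)$ for some $\Omega\in\Gamma_Q(G)$. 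As each such $S$ contributes the monomial $x^{|S_{\overline{Q}}|+|Q|}$ to $\mathcal{V}(G\odot H)$, the problem reduces to computing the generating polynomial, by cardinality, of the non-empty subsets of $U_Q:=\bigcup_{\Omega\in\Gamma_Q(G)}\bigcup_{w\in\Omega}V(H_w)$, and multiplying by $x^{|Q|}$.

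Next I would split into the two cases of the statement. If $Q$ is disjoint-visible, the sets $\bigcup_{w\in\Omega}V(H_w)$ for $\Omega\in\Gamma_Q(G)$ are pairwise disjoint, so $U_Q$ is their disjoint union; since a finite set of $m$ elements has $(1+x)^m$ as the generating polynomial of all its subsets, the non-empty subsets of $U_Q$ are counted by $\sum_{\Omega_Q(G)}\bigl((1+x)^{|\Omega_Q(G)||V(H)|}-1\bigr)$, which is precisely the computation in the proof of Lemma~\ref{P2.lem11} (whose absolute-clear hypothesis served only to secure this disjointness). If $Q$ is not disjoint-visible, I would instead count the non-empty subsets of $U_Q$ of each fixed size $p\geq 1$ by inclusion-exclusion over the finite family $\Gamma_Q(G)$. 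The key observation is that, because the copies $V(H_w)$ are pairwise disjoint, $\bigcap_{\Omega\in\mathcal{J}}\bigl(\bigcup_{w\in\Omega}V(H_w)\bigr)=\bigcup_{w\in\bigcap_{\Omega\in\mathcal{J}}\Omega}V(H_w)$, a set of size $\bigl|\bigcap_{W\in\mathcal{J}}W\bigr|\,|V(H)|$, whose non-empty subsets are generated by $(1+x)^{|\bigcap_{W\in\mathcal{J}}W||V(H)|}-1$; this term is $0$ exactly when $\bigcap_{\Omega\in\mathcal{J}}\Omega=\emptyset$, as it should be. Since inclusion-exclusion acts coefficientwise in $p$, summing over $p$ and multiplying by $x^{|Q|}$ yields the stated alternating sum for $p_Q(x)$. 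Finally, summing $p_Q(x)$ over all proper mutual-visibility sets $Q$ of $G$ gives the asserted contribution; as a consistency check, the second formula collapses to the first when the members of $\Gamma_Q(G)$ happen to be pairwise disjoint (all terms with $|\mathcal{J}|\geq 2$ vanish), and both formulas give $0$ when $\Gamma_Q(G)$ is empty.

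The main obstacle I anticipate is the bookkeeping in the non-disjoint-visible case: a mutual-visibility set $S=Q\cup S_{\overline{Q}}$ lies inside $\bigcup_{w\in\Omega}V(H_w)$ for every maximal absolute $c_Q$-visible set $\Omega$ containing the support of $S_{\overline{Q}}$, so a naive sum over $\Gamma_Q(G)$ overcounts exactly those sets whose support sits inside several maximal sets at once. The care lies in checking that inclusion-exclusion over $\Gamma_Q(G)$ corrects this precisely, and in dealing cleanly with the degenerate configurations (some pairs of maximal sets disjoint, or an intersection $\bigcap_{\Omega\in\mathcal{J}}\Omega$ empty). Everything else is the argument of Lemma~\ref{P2.lem11} together with the routine translation between counting subsets by cardinality and the polynomial $(1+x)^m$.
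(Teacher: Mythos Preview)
Your overall strategy coincides with the paper's: fix a proper mutual-visibility set $Q$, invoke Lemma~\ref{P2.lam9} to characterize the admissible $S_{\overline{Q}}$, quote Lemma~\ref{P2.lem11} in the disjoint-visible case, and apply inclusion--exclusion over $\Gamma_Q(G)$ otherwise. However, there is a genuine slip in your reduction. You write that the problem ``reduces to computing the generating polynomial, by cardinality, of the non-empty subsets of $U_Q:=\bigcup_{\Omega\in\Gamma_Q(G)}\bigcup_{w\in\Omega}V(H_w)$''. This is not what has to be counted: a non-empty $S_{\overline{Q}}\subseteq U_Q$ yields a mutual-visibility set precisely when its support lies inside \emph{one} $\Omega\in\Gamma_Q(G)$, not merely inside the union of all of them. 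Your disjoint-visible computation then contradicts your own reduction: the generating polynomial of the non-empty subsets of a disjoint union of sets of sizes $m_1,\dots,m_k$ is $\prod_i(1+x)^{m_i}-1$, not $\sum_i\bigl((1+x)^{m_i}-1\bigr)$; the discrepancy is exactly the subsets of $U_Q$ whose support straddles several $\Omega$'s, and those are \emph{not} valid choices of $S_{\overline{Q}}$.

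The repair is to drop $U_Q$ and state directly what you are enumerating: non-empty sets $S_{\overline{Q}}$ with $S_{\overline{Q}}\subseteq A_\Omega:=\bigcup_{w\in\Omega}V(H_w)$ for at least one $\Omega\in\Gamma_Q(G)$. Inclusion--exclusion over $\Gamma_Q(G)$ then computes $\bigl|\bigcup_{\Omega}\{T:\emptyset\neq T\subseteq A_\Omega\}\bigr|$, using that $\{T:T\subseteq A_\Omega\ \forall\,\Omega\in\mathcal{J}\}=\{T:T\subseteq\bigcap_{\Omega\in\mathcal{J}}A_\Omega\}$ together with your (correct) identity $\bigcap_{\Omega\in\mathcal{J}}A_\Omega=\bigcup_{w\in\bigcap_{\Omega\in\mathcal{J}}\Omega}V(H_w)$. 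Your final paragraph shows you in fact understand this (you describe the overcount as ``sets whose support sits inside several maximal sets at once''), so the error is the incongruous $U_Q$ sentence, not the underlying mechanism; once that is rewritten, your argument and the paper's coincide.
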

Consider the corona product of two graphs $G$ and $H$ with $m$ and $n$ vertices, respectively. 
Synthesizing the insights developed in Lemmas~\ref{P2.lem1}, \ref{P2.lem2}, \ref{P2.lem3}, and \ref{P2.lem12}, 
we obtain a complete characterization of mutual-visibility sets in the corona product $G \odot H$. 
The following theorem, which summarizes this analysis, provides an explicit expression for the visibility polynomial of the corona product of two graphs. 
Moreover, the first case of Lemma~\ref{P2.lem3} motivates the following definition.
\begin{defn}
Let $G$ be a graph and $0 \leq d \leq diam(G)$. The diameter-restricted visibility polynomial $\mathcal{V}_d(G)$ is defined by 
$\mathcal{V}_d(G)= \sum_{i \geq 0} r_{i,d} x^i$, where $r_{i,d}$ denotes the number of mutual-visibility sets of order $i$ of $G$ having diameter at most $d$. That is, 
$\mathcal{V}_d(G)= 1+\sum_{i \geq 1}\left[\sum_{j \leq d}\Theta_{i,j}(G)\right]x^i$.
\end{defn}
The diameter of any mutual-visibility set of a graph $G$ is at most the diameter of the graph, and there exist two vertices $a, b \in V(G)$ such that $d_G(a, b) = diam(G)$. Therefore, $\mathcal{V}_d(G)=\mathcal{V}(G)$ if and only if $d=diam(G)$.
\begin{theorem}
Let $G$ be a graph with $m = |V(G)| > 1$ and let $H$ be a graph with $n = |V(H)|$, then $\mathcal{V}(G \odot H)= \mathcal{V}(G)+\left((1+x)^{mn}-1 \right)+mx(\mathcal{V}_2(H)-1) + \sum_{Q}p_Q(x)$, where the sum is taken over all proper mutual-visibility sets $Q$ of $G$, and $p_Q(x)$ is defined as in Lemma~\ref{P2.lem12}.
\end{theorem}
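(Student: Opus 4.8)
The plan is to partition the mutual-visibility sets of $G\odot H$ into four classes according to how a set $S$ meets $V(G)$ and the attached copies $H_w$, to count each class by invoking the lemmas already proved, and to add the four generating functions.

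\textbf{The partition.} Given a mutual-visibility set $S$ of $G\odot H$, write $Q=S\cap V(G)$. I would distinguish: (a) $S\subseteq V(G)$, i.e.\ the $H$-part of $S$ is empty (this includes $S=\emptyset$); (b) $Q=\emptyset$ but $S\neq\emptyset$, so $\emptyset\neq S\subseteq\bigcup_{w}V(H_w)$; (c) $|Q|=1$, say $Q=\{v\}$, and $\emptyset\neq S\setminus\{v\}\subseteq V(H_v)$; and (d) $Q\neq\emptyset$ together with $\emptyset\neq S\setminus Q\subseteq\bigcup_{w\notin Q}V(H_w)$. First I would check these classes are exhaustive and pairwise disjoint. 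Disjointness is immediate from the defining conditions. Exhaustiveness is the only nontrivial point: if $Q\neq\emptyset$ and $S$ also contains a vertex of some $H_w$, then when $|Q|=1$ Lemma~\ref{P2.lem3} (more precisely the dichotomy established at the start of its proof) forces the $H$-part of $S$ to lie entirely in $H_v$ (class (c)) or entirely in $\bigcup_{w\neq v}V(H_w)$ (class (d)); and when $|Q|\geq2$ the contrapositive of Lemma~\ref{P2.lem5} together with Lemma~\ref{P2.lem8} forces $S\subseteq Q\cup\bigl(\bigcup_{w\notin Q}V(H_w)\bigr)$, which is class (d).

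\textbf{Counting the classes.} Class (a): by Lemma~\ref{P2.lem1} these are exactly the mutual-visibility sets of $G$ (with $\emptyset$ among them), so the contribution is $\mathcal{V}(G)$. Class (b): by Lemma~\ref{P2.lem2} \emph{every} nonempty subset of the $mn$ vertices of $\bigcup_w V(H_w)$ is a mutual-visibility set, contributing $\sum_{i\geq1}\binom{mn}{i}x^{i}=(1+x)^{mn}-1$. Class (c): by Case~1 of Lemma~\ref{P2.lem3}, for a fixed $v$ the admissible $H$-parts are precisely the nonempty mutual-visibility sets $B$ of $H$ with $\operatorname{diam}_{H}(B)\leq2$, and the number of such sets of size $i$ is the coefficient of $x^{i}$ in $\mathcal{V}_2(H)-1$; since the vertex $v$ contributes a factor $x$ and there are $m$ choices of $v$, class (c) contributes $mx\,(\mathcal{V}_2(H)-1)$. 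Class (d): here $Q=S\cap V(G)$ is nonempty and proper (if $Q=V(G)$, applying Lemma~\ref{P2.lem5} with $A=V(G)$ and using $m\geq2$ shows $S$ is not a mutual-visibility set) and it is a mutual-visibility set of $G$ (by Lemma~\ref{P2.lem8} when $|Q|\geq2$, trivially when $|Q|=1$, using $m\geq2$ so that a singleton of $V(G)$ is proper); conversely, every $S=Q\cup S_{\overline{Q}}$ with $Q$ a proper nonempty mutual-visibility set of $G$ and $\emptyset\neq S_{\overline{Q}}\subseteq\bigcup_{w\in\overline{Q}}V(H_w)$ that is a mutual-visibility set of $G\odot H$ falls into class (d). Hence the generating function of class (d) is exactly the quantity computed in Lemma~\ref{P2.lem12}, namely $\sum_{Q}p_Q(x)$ over all proper mutual-visibility sets $Q$ of $G$ (with $p_Q(x)=0$ when $Q$ admits no nonempty absolute $c_Q$-visible set).

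\textbf{Conclusion and the main obstacle.} Adding the four generating functions, and noting that the empty set is counted exactly once (in $\mathcal{V}(G)$) and in no other class, yields
\[
\mathcal{V}(G\odot H)=\mathcal{V}(G)+\bigl((1+x)^{mn}-1\bigr)+mx\,(\mathcal{V}_2(H)-1)+\sum_{Q}p_Q(x).
\]
The main obstacle is the bookkeeping in the partition step: one must verify carefully that class (d) captures \emph{all} mutual-visibility sets with $|Q|\geq1$ and nonempty $H$-part that are not of type (c) — this is exactly where Lemmas~\ref{P2.lem5} and~\ref{P2.lem8} do the real work — that the $|Q|=1$ sets split cleanly between (c) and (d) via Lemma~\ref{P2.lem3}, that $Q$ is never all of $V(G)$ in class (d), and that no set is double-counted.
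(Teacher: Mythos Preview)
Your proposal is correct and follows essentially the same approach as the paper: the same four-way split into subsets of $V(G)$, nonempty subsets of $\bigcup_w V(H_w)$, sets $\{v\}\cup B$ with $B\subseteq V(H_v)$ covered by Case~1 of Lemma~\ref{P2.lem3}, and sets $Q\cup S_{\overline{Q}}$ covered by Lemma~\ref{P2.lem12}, with Lemma~\ref{P2.lem5} ruling out $Q=V(G)$. If anything, you are more explicit than the paper in verifying that the four classes are exhaustive and pairwise disjoint, which the paper largely asserts.
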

\begin{proof}
All mutual-visibility sets of $G \odot H$ are characterized in Lemmas~\ref{P2.lem1}, \ref{P2.lem2}, \ref{P2.lem3}, and \ref{P2.lem12}. Accordingly, we compute $\mathcal{V}(G \odot H)$ by considering four distinct cases. By Lemma~\ref{P2.lem1}, all mutual-visibility sets of $G$ are also mutual-visibility sets of $G\odot H$. Therefore, the contribution to $\mathcal{V}(G \odot H)$ in this case is $\mathcal{V}(G)$. By Lemma~\ref{P2.lem2}, all subsets of $\cup_{w \in V(G)} V(H_w)$ are also mutual-visibility sets of $G \odot H$; however, the empty subset is already counted in the previous case. Hence, the contribution in this case is $\left((1+x)^{mn} - 1\right)$. Next, we compute the contribution corresponding to Case~1 of Lemma~\ref{P2.lem3}. Specifically, mutual-visibility sets of the form $\{v\} \cup B$, where $B$ is a mutual-visibility set of $H_v$ with diameter at most two, contribute $x(\mathcal{V}_{2}(H)-1)$. Since there are $m$ such vertices $v$ in $V(G)$, the total contribution in this case is $mx(\mathcal{V}_2(H)-1)$.

Finally, Lemma~\ref{P2.lem12} accounts for the remaining contribution to $\mathcal{V}(G \odot H)$, which includes the mutual-visibility sets referred to in the second case of Lemma~\ref{P2.lem3}. Specifically, this corresponds to mutual-visibility sets of the form $S = Q \cup S_{\overline{Q}}$, where $\emptyset \neq Q \subsetneq V(G)$ and $\emptyset \neq S_{\overline{Q}} \subseteq \cup_{w \in \overline{Q}} V(H_w)$. Note that, by Lemma~\ref{P2.lem5}, no proper superset of $V(G)$ is a mutual-visibility set of $G \odot H$. This completes the proof.
\end{proof}
\begin{table}[htbp]
    \centering
    \caption{$Q, \Omega_Q(G)$ and its contribution to $\mathcal{V}(P_3 \odot K_2)$}\label{P2.tbl1}
    \begin{tabularx}{0.9\linewidth}{@{}XXll@{}}
        \toprule
       Q & $\Omega_Q(G)$ & $\sum_{\Omega_Q(G)}\left((1+x)^{|\Omega_Q(G)||V(H)|}-1\right)x^{|Q|}$\\
        \midrule
        $\{1\}$ & $\{2, 3\}$& $((1+x)^4-1)x$\\
        $\{2\}$ & $\{1\}$, $\{3\}$ &$((1+x)^2-1)x+((1+x)^2-1)x$\\
        $\{3\}$ & $\{1,2\}$ &$((1+x)^4-1)x$\\
        $\{1,3\}$ & $\{2\}$ &$((1+x)^2-1)x^2$\\
        $\{1,2\}$ & $\emptyset$ &0\\
        $\{2,3\}$ & $\emptyset$ &0\\
        \bottomrule
    \end{tabularx}
\end{table}
\textbf{Example:} Let $G=P_3=(1,2,3)$ and $H=K_2$. Then 
$\mathcal{V}(P_3 \odot K_2)= \mathcal{V}(P_3)+((1+x)^{6}-1)+3x(\mathcal{V}_2(K_2)-1)+\sum_Q \sum_{\Omega_Q(G)}\left((1+x)^{|\Omega_Q(G)||V(H)|}-1\right)x^{|Q|}$, since $P_3$ is absolute-clear [See Prop. \ref{P2.prop3}]. All possible $Q$'s, maximal absolute $c_Q$-visible sets and corresponding contribution to $\mathcal{V}(P_3 \odot K_2)$ are listed in Table~\ref{P2.tbl1}. $\mathcal{V}(P_3)= 1+3x+3x^2$ from \cite{sandi} and $\mathcal{V}_2(K_2)=\mathcal{V}(K_2)=(1+x)^2$ from \cite{VP_1}. Therefore, $\mathcal{V}(P_3 \odot K_2)= 1+3x+3x^2+((1+x)^{6}-1)+3x((1+x)^2-1)+2((1+x)^4-1)x+2((1+x)^2-1)x+((1+x)^2-1)x^2 = 1 + 9x + 36x^{2} + 39x^{3} + 24x^{4} + 8x^{5} + x^{6}
$
\section{Concluding Remarks}
This paper examined the structure of mutual-visibility sets in the corona product of two connected graphs. By establishing a series of lemmas, we identified necessary and sufficient conditions for a subset of vertices to constitute a mutual-visibility set in the corona graph $G \odot H$. Leveraging these structural insights, we derived a closed-form expression for the visibility polynomial of $G \odot H$, which encodes the enumeration of mutual-visibility sets by their cardinality.

In this work, we introduced the concepts of $c_Q$-visible sets and absolute-clear graphs as tools for expressing the visibility polynomial of a graph in a concise and structured manner. These concepts offer a new perspective in visibility theory, providing both a compact polynomial framework and insight into the structural behavior of graphs under visibility constraints.

\begin{figure}[h]
    \centering
\begin{tikzpicture}
\begin{scope}[xshift=-4.5cm, rotate=18, scale=0.9]
    \def\n{5}
    \def\radius{1.5}
    \def\labelradius{1.85}
    \foreach \i in {1,...,\n} {
      \node[circle, fill=black, inner sep=2pt] (B\i) at ({360/\n * (\i -1)}:\radius) {};
      \node at ({360/\n * (\i -1)}:\labelradius) {\i};
    }
    \draw[line width=0.8pt] (B2) -- (B3);
    \draw[line width=0.8pt] (B3) -- (B4);
    \draw[line width=0.8pt] (B4) -- node[pos=0.5, yshift=-5mm] {$G_1$}(B5);
    \draw[line width=0.8pt] (B5) -- (B1);
    \draw[line width=0.8pt] (B1) -- (B3);
    \draw[line width=0.8pt] (B3) -- (B5);
\end{scope}

\begin{scope}[xshift=0cm, rotate=18, scale=0.9]
    \def\n{5}
    \def\radius{1.5}
    \def\labelradius{1.85}
    \foreach \i in {1,...,\n} {
      \node[circle, fill=black, inner sep=2pt] (B\i) at ({360/\n * (\i -1)}:\radius) {};
      \node at ({360/\n * (\i -1)}:\labelradius) {\i};
    }
    \draw[line width=0.8pt] (B2) -- (B3);
    \draw[line width=0.8pt] (B3) -- (B4);
    \draw[line width=0.8pt] (B4) -- node[pos=0.5, yshift=-5mm] {$G_2$}(B5);
    \draw[line width=0.8pt] (B5) -- (B1);
    \draw[line width=0.8pt] (B1) -- (B3);
    \draw[line width=0.8pt] (B3) -- (B5);
    \draw[line width=0.8pt] (B1) -- (B4);
    \draw[line width=0.8pt, red] (B1) -- (B4);
\end{scope}

\begin{scope}[xshift=4.5cm, rotate=18, scale=0.9]
    \def\n{5}
    \def\radius{1.5}
    \def\labelradius{1.85}
    \foreach \i in {1,...,\n} {
      \node[circle, fill=black, inner sep=2pt] (B\i) at ({360/\n * (\i -1)}:\radius) {};
      \node at ({360/\n * (\i -1)}:\labelradius) {\i};
    }
    \foreach \i in {1,...,\n} {
      \pgfmathtruncatemacro\nexti{mod(\i,\n) + 1}
      \draw[line width=0.8pt] (B\i) -- (B\nexti);
    }
    \draw[line width=0.8pt,red] (B1) -- (B2);
    \draw[line width=0.8pt] (B1) -- (B3);
    \draw[line width=0.8pt] (B3) -- (B5);
    \node[draw=none, fill=none] at (-0.5,-1.7) {$G_3$};
\end{scope}
\end{tikzpicture}
    \caption{Problem on absolute-clear graphs}
    \label{abs_clear_prob}
\end{figure}
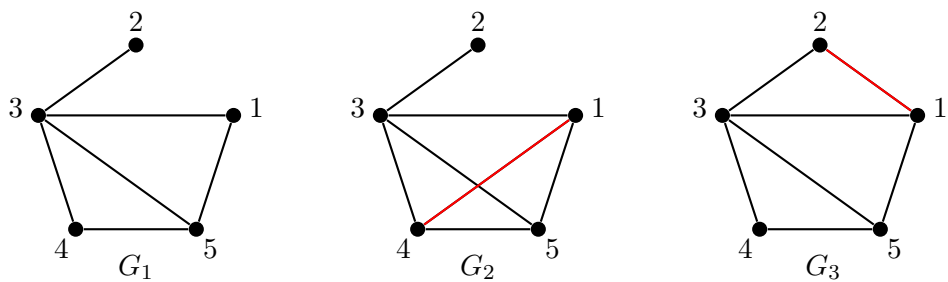

As an illustrative case, Figure~\ref{abs_clear_prob} presents the graph $G_ 
1$, which is absolute-clear. Adding an edge from vertex 1 to vertex 4 yields $G_2$, which retains the absolute-clear property. In contrast, adding an edge from vertex 1 to vertex 2 produces $G_3$, which is no longer absolute-clear. This observation motivates the following question:\\
Problem: What are the necessary and sufficient conditions for adding an edge to a graph such that it remains absolute-clear?

Future research directions include developing precise structural criteria for edge additions that preserve the absolute-clear property, and extending the analysis to other graph operations such as edge deletion, subdivision, and various graph products to understand their influence on $C_Q$-visible sets and absolute-clear graphs. The relationship between the visibility polynomial and other established graph polynomials may be investigated, potentially revealing deeper combinatorial connections. Furthermore, these concepts can be applied to practical domains like network design, robotics, and sensor placement, where visibility and structural clarity are crucial.

\textbf{Data availability:} All non-isomorphic graphs of order up to nine were generated using the nauty27r3 software package.
\bibliographystyle{plainurl}
\bibliography{cas-refs}
\end{document}